\newcommand{\editA}[1]{{\color{black}#1}} 
\newtheorem{theorem}{Theorem}[section]
\newtheorem{claim}[theorem]{Claim}
\newtheorem{definition}[theorem]{Definition}
\newtheorem{proposition}[theorem]{Proposition}
\newtheorem{lemma}[theorem]{Lemma}
\DeclareMathOperator{\Aff}{Aff}
\DeclareMathOperator{\Sub}{Sub}
\DeclareMathOperator{\Heis}{Heis}
\DeclareMathOperator{\GL}{GL}
\DeclareMathOperator{\SL}{SL}
\DeclareMathOperator{\PGL}{PGL}
\DeclareMathOperator{\SO}{SO}
\DeclareMathOperator{\Ad}{Ad}
\newcommand{\RR}{\mathbb{R}}
\newcommand{\ZZ}{\mathbb{Z}}
\newcommand{\CC}{\mathbb{C}}
\newcommand{\fraksl}{\mathfrak{sl}}
\newcommand{\frakgl}{\mathfrak{gl}}
\newcommand{\frakb}{\mathfrak{b}}
\definecolor{green}{rgb}{0.0, 0.5, 0.0}
\definecolor{purple}{rgb}{0.5, 0.0, 0.5}
\definecolor{bluegreen}{rgb}{0.0,0.5, 0.5}
\definecolor{orange}{rgb}{1,0.5, 0.1}
\definecolor{redgreen}{rgb}{0.5, 0.5, 0.0}
\def\green{\color{green}}
\def\green{\color{green}}
\def\g2{{\green 2}}
\title{Local Limits of Connected Subgroups of $\SL_3(\mathbb{R})$}
\author{Nir Lazarovich and Arielle Leitner} \date{}
\begin{document}
\maketitle

\begin{abstract}
In this paper we describe the local limits under conjugation of all closed connected subgroups of $\SL_3 ( \RR)$ in the Chabauty topology. 
\end{abstract} 

\section{Introduction}

Let $G$ be a locally compact second countable group.  The set $\Sub(G)$ of closed subgroups of $G$ may be endowed with the \emph{Chabauty topology}, with which it is a compact space.  The Chabauty topology was introduced in several places, among them \cite{chab, Fell}.   For an overview we recommend \cite{harpe}. We are interested in \emph{convergence} in the Chabauty topology, which for a locally compact second countable group can be defined as follows: 

\begin{definition}[See \cite{CEG}]\label{conv}  A sequence of closed subgroups $\{ H_n \} \leq G$ {\emph converges} to $H \leq G$ if the following two conditions hold
\begin{enumerate} 
\item For every $h \in H$, there exists a sequence of elements $h_n \in H_n$ so that $h_n \to h$. 
\item Given a sequence of elements $h_n \in H_n$, for every convergent subsequence $({h_n}_k) \to h$ then $h \in H$. 
\end{enumerate} 
A group $H \leq G$ \emph{converges to a group} $L\leq G$ \emph{ under conjugacy} if there exists a sequence $p_n \in G$ such that $p_n H p_n^{-1}$ converges to $L$ in the sense of the definition above. 

A connected subgroup $H$ \emph{locally converges} to a connected subgroup $L$ under conjugacy if there is a sequence $p_n \in G$ such that $ p_n H p_n ^{-1} \to L'$ and $L$ is the identity component of $L'$.  
\end{definition}

Limits of connected subgroups are not necessarily connected. For instance, Example 2 in  \cite[Section 3.2]{CDW} shows that there is a sequence of conjugates of the rotation subgroup $\SO(2)\le \SL_2(\RR)$ that converges to the subgroup 
$ \left  \{ \left ( \begin{smallmatrix} \pm 1 & x \\0 &\pm1 \end{smallmatrix} \right ): x \in \RR \right \} $
which has two connected components. It is therefore necessary to pass to the identity component of the limit in the definition of local convergence.


Let $G=\SL_3(\RR)$.  Our main result is a description of the local convergence of connected subgroups of $G$. Note that we consider subgroups up to conjugacy, and use the classification of subalgebras of $\mathfrak{g}$ up to conjugacy by Winternitz \cite{Winternitz}, their images under the exponential map are connected subgroups of $G$.   In Section \ref{classification}, we have written a subsection for each dimension of subgroups. 
	
	\begin{theorem}\label{main}
		The local convergence of the connected subgroups of $\SL_3(\RR)$ of each dimension is described by the chart of limits in the corresponding section in the paper.
	\end{theorem}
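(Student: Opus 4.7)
The plan is to translate local Chabauty convergence of connected subgroups of $G = \SL_3(\RR)$ into convergence of their Lie subalgebras inside the Grassmannian of $\mathfrak{g} = \mathfrak{sl}_3(\RR)$, and then to carry out an exhaustive case analysis dimension by dimension. Via the exponential map, each connected subgroup corresponds to a Lie subalgebra, and conjugation by $p_n \in G$ acts on the subalgebra by the adjoint representation $\Ad(p_n)$. Since conjugation preserves dimension, the identity component of any Chabauty limit of $p_n H p_n^{-1}$ has the same dimension as $H$ and corresponds to a limit of $\Ad(p_n)\mathfrak{h}$ in the Grassmannian; such a limit is automatically a subalgebra by continuity of the bracket, and must therefore be conjugate to one on Winternitz's finite list in the same dimension.

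Next, we would reduce the space of conjugating sequences using the Cartan decomposition $G = KAK$ with $K = \SO(3)$ and $A$ the diagonal Cartan. Writing $p_n = k_n a_n k'_n$ and passing to subsequences, we may assume $k_n \to k$ and $k'_n \to k'$ in $K$. Since conjugation by bounded elements acts continuously on the Grassmannian, the limit of $\Ad(p_n)\mathfrak{h}$ is determined, up to the fixed rotation by $k$, by the diagonal sequence $a_n = \mathrm{diag}(e^{s_n}, e^{t_n}, e^{-s_n - t_n})$ applied to $\Ad(k')\mathfrak{h}$. Passing to a further subsequence we can arrange the normalized direction $(s_n, t_n)/\|(s_n,t_n)\|$ to converge to a unit vector in $\RR^2$, or to remain bounded. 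This partitions the asymptotic behavior into finitely many regimes indexed by the open Weyl chambers of $\mathfrak{a}$, their walls, and the bounded case.

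Within each regime, the limit of $\Ad(a_n)\mathfrak{h}$ can be computed from the root space decomposition of $\mathfrak{h}$ with respect to the Cartan: a root vector is scaled exponentially by the pairing of its root with the direction of $a_n$, so components of positive pairing are killed in the Grassmannian, components of negative pairing dominate and determine the limit direction, and kernel components are fixed. The surviving span is then identified against Winternitz's classification and conjugated back by $\Ad(k)$ to read off its conjugacy class. Iterating this computation across every conjugacy class in each dimension and every asymptotic regime produces the charts presented in the dimension-by-dimension sections of the paper.

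The main obstacle is completeness and closure of the resulting charts. We must verify that the KAK analysis captures every sequence of conjugates, that no spurious jump in dimension arises from elements of $p_n H p_n^{-1}$ accumulating away from the identity, and that the chart is closed under the transitive closure of the local limit relation, so that every iterated limit of a subgroup is already listed. Establishing these points in each dimension, together with the combinatorial bookkeeping of conjugacy classes, roots, and regimes, constitutes the bulk of the work; the higher-dimensional cases are particularly delicate because a single subalgebra can admit many inequivalent degenerations along different walls.
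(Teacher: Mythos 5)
Your overall shape (pass to Lie subalgebras, compute degenerations case by case, assemble charts per dimension) matches the paper's strategy, but two steps as written would fail. First, you assert that the identity component of a Chabauty limit of $p_nHp_n^{-1}$ has the same dimension as $H$ ``since conjugation preserves dimension.'' This does not follow: each conjugate has the same dimension, but the Chabauty limit can have strictly larger dimension --- this actually happens for a connected subgroup of $\SL_4(\RR)$, as the paper notes citing \cite{CDW}. Dimension preservation in $\SL_3(\RR)$ is exactly Proposition \ref{samedim}, and its proof is non-trivial: for definable subgroups it follows from o-minimality \cite{Dries}, and for the two non-definable families $W_{1,2}^z$ and $W_{3,8/9}^z$ the paper gives separate arguments using positive translation length on the associated symmetric space. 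Without this, your dimension-by-dimension organization and the identification of limits against Winternitz's list in the same dimension are unjustified.

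Second, the $KAK$ reduction is invalid. Writing $p_n = k_na_nk_n'$ with $k_n\to k$ and $k_n'\to k'$, you propose to compute the limit of $\Ad(a_n)\Ad(k')\mathfrak{h}$; but the inner factor sits inside the conjugation by the divergent $a_n$, so the limit of $\Ad(a_nk_n')\mathfrak{h}$ need not equal the limit of $\Ad(a_nk')\mathfrak{h}$ --- an $o(1)$ perturbation of the subalgebra can be blown up by $\Ad(a_n)$ into a different limit. The paper's Iwasawa reduction $G=KB$ avoids this because the compact factor is on the outside, where conjugation by a convergent sequence does commute with Chabauty limits; it then further reduces the conjugating sequences inside $B$ using factorizations such as $B=W_{3,1}N'$. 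Relatedly, even for diagonal $a_n$ the limit of a subspace that is not a sum of root spaces is not simply the span of its most-contracted components; a filtration argument is needed. The paper sidesteps the resulting continuum of cases by combining explicit conjugating sequences for the arrows that do occur with conjugacy invariants --- monotonicity of normalizer dimension (Proposition \ref{normalizer}), the characteristic-polynomial invariant (Proposition \ref{char}), universal relations (Proposition \ref{univ}), and preservation of algebraicity (Theorem \ref{algebraic}) --- to exclude all remaining arrows, resorting to explicit Iwasawa-type computations only in a handful of cases. Your proposal names the completeness problem but supplies neither these invariants nor a workable substitute.
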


The connected subgroups of $\SL_3 (\RR)$ were classified by Winternitz \cite{Winternitz}, who provided a full list of subalgebras of $\mathfrak{sl}_3(\RR)$ up to conjugacy. 
 In each section, 
we first list the subgroups 
together with their normalizers and properties, and then we 
provide a chart which shows which groups locally limit to others by conjugation. Following each chart, we prove that this is indeed the complete chart of limits. 

	Theorem \ref{main} gives a partial understanding of the closure of the connected subgroups in $\Sub(G)$ in the following sense.
The conjugacy class of each connected subgroup $H \in \Sub(G)$ is 
a subspace homeomorphic to $G/ N_G(H)$. 
The closure of the conjugacy class of $H$ in $\Sub(G)$ consists of conjugacy classes of subgroups whose identity components are the local limits of $H$ described by our main result.


Using work of \cite{CDW, Dries} we prove the following proposition which is a component of the proof of Theorem \ref{main}.

\begin{proposition}\label{samedim} 
	Let $G=\SL_3(\RR)$ and let $H \leq G$ be a connected subgroup and $L$ a local limit of $H$.  Then $\dim H = \dim L$. 
\end{proposition}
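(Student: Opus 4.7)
The strategy is to translate Chabauty convergence of conjugates of $H$ into Grassmannian convergence of their Lie algebras, and then argue that the limiting subspace is precisely the Lie algebra of $L$. Set $k=\dim H$ and let $\mathfrak{h}\subseteq\mathfrak{sl}_3(\RR)$ be the Lie algebra of $H$. For each $n$, the conjugate subgroup $g_nHg_n^{-1}$ has Lie algebra $\Ad(g_n)\mathfrak{h}$, which is a point of the Grassmannian $\mathrm{Gr}_k(\mathfrak{sl}_3(\RR))$. Since this Grassmannian is compact, after passing to a subsequence one may assume $\Ad(g_n)\mathfrak{h}\to\mathfrak{l}$ for some $k$-plane $\mathfrak{l}\subseteq\mathfrak{sl}_3(\RR)$. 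Continuity of the Lie bracket on $\mathrm{Gr}_k$ (each $\Ad(g_n)\mathfrak{h}$ is a Lie subalgebra, and the bracket relation passes to limits of subspaces) shows that $\mathfrak{l}$ is itself a Lie subalgebra of dimension $k$.

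Next, I would identify $\mathfrak{l}$ with the Lie algebra of $L$. For the inclusion $\exp(\mathfrak{l})\subseteq L'$: given $X\in\mathfrak{l}$, by convergence of $k$-planes choose $X_n\in\Ad(g_n)\mathfrak{h}$ with $X_n\to X$; then $\exp X_n\in g_nHg_n^{-1}$ and $\exp X_n\to\exp X$, so $\exp X\in L'$ by the first clause in the definition of Chabauty convergence. Hence the connected subgroup generated by $\exp(\mathfrak{l})$, a connected $k$-dimensional Lie subgroup, lies in $L'$, and so in $L$.

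For the reverse inclusion $L\subseteq\langle\exp(\mathfrak{l})\rangle$, I would take a small neighborhood $U$ of the identity on which $\exp\colon\mathfrak{sl}_3(\RR)\to\SL_3(\RR)$ is a diffeomorphism onto its image. Any $\ell\in L$ close to the identity can be written $\ell=\exp Y$ for small $Y$; using the second clause of Chabauty convergence pick $\ell_n\in g_nHg_n^{-1}$ with $\ell_n\to\ell$, so for large $n$ we have $\ell_n=\exp Y_n$ with $Y_n\in\Ad(g_n)\mathfrak{h}$ and $Y_n\to Y$. Since $\Ad(g_n)\mathfrak{h}\to\mathfrak{l}$ in $\mathrm{Gr}_k$, the limit satisfies $Y\in\mathfrak{l}$. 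Therefore a neighborhood of the identity in $L$ lies in $\exp(\mathfrak{l})$, and connectedness of $L$ gives $L=\langle\exp(\mathfrak{l})\rangle$. Combining both inclusions yields $\dim L=\dim\mathfrak{l}=k=\dim H$.

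The main obstacle is the reverse inclusion: a priori nothing forbids $L$ from containing directions transverse to $\mathfrak{l}$ that only appear as Chabauty limits of group elements without coming from Lie algebra limits. The resolution uses the local diffeomorphism property of $\exp$ near $0$ together with the equivalence, for linear subspaces of fixed dimension, between Chabauty convergence and Grassmannian convergence. This equivalence, and the resulting compatibility of taking Lie algebras with Chabauty limits of connected subgroups, is precisely what is supplied by the cited results of \cite{CDW, Dries}, which I would invoke to make this identification rigorous.
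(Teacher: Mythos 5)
There is a genuine gap, and it sits exactly where you flag the ``main obstacle'': the reverse inclusion $L\subseteq\langle\exp(\mathfrak{l})\rangle$. From $\ell_n\in g_nHg_n^{-1}$ with $\ell_n\to\ell$ near the identity you conclude ``$\ell_n=\exp Y_n$ with $Y_n\in\Ad(g_n)\mathfrak{h}$ and $Y_n\to Y$,'' but only the first half is available: since $\exp$ is a local diffeomorphism, $\ell_n$ has a unique \emph{small} logarithm $Y_n\to Y$, yet nothing forces that small logarithm to lie in $\Ad(g_n)\mathfrak{h}$. The element $\ell_n$ is $g_nh_ng_n^{-1}$ for some $h_n\in H$ that may be very far from the identity in $H$ (so its natural logarithm in $\Ad(g_n)\mathfrak{h}$ is large, or $h_n$ is not in the image of $\exp|_{\mathfrak{h}}$ at all), and the conjugate only returns near the identity ``accidentally.'' This is not a technicality: your argument uses nothing about $\SL_3(\RR)$ and would prove the same statement for every connected subgroup of every Lie group, which is false --- the paper points to the example in \cite{CDW} Section 3.2 and notes explicitly that the proposition fails in $\SL_4(\RR)$. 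Any correct proof must therefore use something specific to the list of connected subgroups of $\SL_3(\RR)$.

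Your closing appeal to \cite{CDW, Dries} for ``the equivalence between Chabauty convergence and Grassmannian convergence'' misreads what those results provide. \cite{CDW} Theorem 3.1 gives dimension preservation for \emph{algebraic} subgroups, and \cite{Dries} extends this to groups \emph{definable} in an O-minimal structure; neither is a general statement about arbitrary connected subgroups. The paper's actual proof is a case analysis over Winternitz's classification: algebraic subgroups are handled by \cite{CDW}, the families $W_{1,1}^{(a,b)}$ and $W_{1,3}$ are definable in an O-minimal structure with real exponentials and handled by \cite{Dries}, and the genuinely non-definable families $W_{1,2}^z$ and $W_{3,8/9}^z$ (involving $e^{zt}$ with $z$ complex, which is definable in no O-minimal structure) require two bespoke lemmas: for $W_{1,2}^z$ one uses that the group acts with positive translation length on the symmetric space to prevent elements far out in $H$ from being conjugated back near the identity (precisely the failure mode above), and for $W_{3,8/9}^z$ one reduces via the Iwasawa decomposition and the projection of the parabolic $Q\to\GL_2(\RR)$ to the one-parameter case. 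Those two lemmas are the content your proposal is missing.
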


Note that this fails in $\SL_4(\RR)$ as shown in \cite{CDW}. This also implies that the local limit can be seen as a limit of the corresponding Lie subalgebras under the $\Ad(G)$ action. 

	The proof of Proposition \ref{samedim} relies on the following Theorem, which we believe might be of independent interest. 
\begin{theorem}\label{1d limits}
	Let $X\in \frakgl_d(\CC)$ be a matrix with an eigenvalue which is not purely imaginary. Then the local conjugacy limits of the one-dimensional closed subgroup $H=\{\exp(tX) \;|\; t\in \RR \} \le \GL_d(\CC)$ are one-dimensional.
\end{theorem}


In all but a few cases the homeomorphism type of all of $\Sub(G)$ is still unknown.  However, progress towards understanding the topology has been made on the Heisenberg group by \cite{BHK}, on $\RR \times \ZZ$ by \cite{Haettel2}, on the set of Cartan subgroups of $\SL_n(\RR)$ by \cite{Leitnersl3, Leitnersln, Haettel}, on $\RR^2$ by \cite{HP}, on $\RR^n$ by \cite{kloeckner}, and \cite{CDW} make progress on limits of symmetric subgroups in $\PGL_n (\RR)$.

The Chabauty compactification $\Sub(G)$ may be used to compactify Bruhat-Tits buildings or symmetric spaces, by identifying points in those spaces with their stabilizers in $G$ viewed as points in $\Sub(G)$. 
See \cite{GJT, GR, CLV}. 

Aknowldegements: Lazarovich was supported by ISF grant No.1562/19 and by the German-Israeli Foundation for Scientific Research and Development.  Leitner was partially supported by the
ISF-UGC joint research program framework grant 1469/14 and 577/15 and 704/08.
We thank Corina Ciobotaru and Marc Burger for helpful discussions, and the latter for suggesting we look at O-minimal structures.    We thank the referee for many insightful comments which improved the paper. 

\section{Notation and Methods}

As a convention, throughout the paper: $a,b,z$ are parameters for infinite families, $s,t,*$ are variables.  We think of $\CC$ as $( \begin{smallmatrix} * \\ * \end{smallmatrix})$, and $\mathbb{C}^*$ as $ ( \begin{smallmatrix} s & t \\ -t & s \end{smallmatrix} )$. We avoid using set builder notation as it is clunky. 

   Throughout this paper, we will follow the notation given by Winternitz \cite{Winternitz} where $W_{d,k}$ denotes the $k$th group in the list of subgroups of $\SL_3(\RR)$ with Lie algebra of dimension $d$.  Sometimes a group depends on some parameters, which we denote in the superscript; sometimes a group is not conjugate to its transpose, and so we might list them together using the $/$ notation. See e.g. the 3-dimensional groups on p.10.  

We begin by proving Proposition \ref{samedim}. To do so, we first review some theorems which we apply throughout the paper.  


\begin{theorem}[ \editA{\cite[Theorem 3.1]{CDW}}]\label{algebraic}  Let $G$ be an algebraic group (defined over $\CC$ or $\RR$). Suppose that $H$ is an algebraic
subgroup and $L$ a conjugacy limit of $H$. Then $L$ is algebraic and $\dim L = \dim H$.
\end{theorem}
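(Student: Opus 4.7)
The plan is to show that every member of the conjugacy family $\{gHg^{-1}:g\in G\}$ lives inside a single algebraic family of closed subvarieties of $G$, so that any Chabauty limit is forced by the algebraic structure to remain algebraic and of the same dimension as $H$.

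First I would observe that $H$ is cut out of $G$ by finitely many polynomial equations of some bounded degree $d$. Since conjugation by any $g \in G$ is an algebraic automorphism of $G$ (extending to its projective closure), the conjugate $gHg^{-1}$ is cut out by polynomial equations of degree at most $d$, and in fact has the same Hilbert polynomial as $H$. In particular, each conjugate has the same dimension as $H$, so the issue is only whether the dimension can drop in the limit and whether the limit remains algebraic.

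Next, I would pass to a Chow variety or Hilbert scheme $\mathcal{C}$ parametrizing closed subvarieties of a projective compactification of $G$ with this fixed Hilbert polynomial. Such $\mathcal{C}$ is projective, so its real (or complex) points are compact in the classical topology, and the map $g \mapsto gHg^{-1}$ lifts to an algebraic map $G \to \mathcal{C}$. Therefore any sequence $g_n H g_n^{-1}$ has a subsequential limit in $\mathcal{C}$, corresponding to an honest algebraic subvariety of $G$ (together with possibly some components lying in the boundary of the compactification) whose interior part has the same dimension as $H$. The final step is to identify this algebraic-geometric limit, restricted to $G$, with the Chabauty limit $L$; on bounded-degree families, convergence in $\mathcal{C}$ in the classical topology agrees with Hausdorff convergence on compacta of $G$, which is exactly Chabauty convergence. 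Since the Chabauty limit of closed subgroups is automatically a closed subgroup, $L$ inherits both algebraicity and the correct dimension.

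The main obstacle is controlling components ``at infinity'': when one compactifies $G$ to apply properness of the Hilbert scheme, the algebraic limit could in principle pick up components in the boundary divisor, which would be invisible to the Chabauty topology and could force the dimension of $L$ (as a subset of $G$) to drop. Ruling this out amounts to showing that the entire algebraic limit is carried by its restriction to $G$, which is the heart of the argument and uses that conjugation preserves the full group structure rather than merely the underlying variety. Once this is in place, the equality $\dim L = \dim H$ and algebraicity of $L$ follow simultaneously.
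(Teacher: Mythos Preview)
The paper does not give its own proof of this theorem; it is quoted verbatim from \cite{CDW} as an external input and used throughout as a black box, so there is no in-paper argument to compare against. Your outline is essentially the strategy of the original \cite{CDW} proof: parametrize the conjugates by an algebraic map into a projective Hilbert scheme (or Chow variety) and use properness to force the limit to be algebraic of the same dimension. You have also correctly isolated the one genuine difficulty---that the flat limit in the compactification could acquire components supported on the boundary divisor, so its trace on $G$ might have smaller dimension than $H$. Your explanation of why this does not happen (``conjugation preserves the full group structure rather than merely the underlying variety'') is pointing in the right direction but is not yet an argument; in \cite{CDW} this is handled by showing that the algebraic limit inside $G$ is itself a subgroup and comparing it directly with the Chabauty limit, and making that step precise is exactly what would be required to turn your sketch into a proof.
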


A more general notion than an algebraic group is a \emph{definable} group, in the sense of an O-minimal structure, see \cite{Coste}.   Many of the properties of algebraic sets carry over to this more general setting. \editA{\cite[Proposition 3.1]{Dries}} implies that the limit of a definable group is definable, and the dimension stays constant under taking a limit. 

There are three flavors of non-algebraic groups among connected subgroups of $\SL_3(\RR)$: 
$$ \begin{pmatrix} e ^{at } & \cdot & \cdot \\
0 & e ^{bt}  & \cdot \\
0 & 0 & e^{-(a+b)t}
\end{pmatrix} : 
a,b \in \mathbb{R} \textrm{ fixed} 
\qquad 
\begin{pmatrix} 
e ^t & t e^t & \cdot \\
0 & e^t & \cdot \\
0 & 0 & e ^{-2t} 
\end{pmatrix} 
\qquad 
\begin{pmatrix}  e^ {zt} & \vdots \\
0 & e ^{-2\mathfrak{R}(z)t}
\end{pmatrix} :
z \in \mathbb{C} \textrm{ fixed} 
$$
here the $\cdot$ can be zero or any element of $\mathbb{R}$. The first and last are infinite families of groups, since we can choose any fixed $a, b \in \RR$ or $z \in \CC$.  The second item is only one group. 
The first two families are definable where the O-minimal structure defined including real exponential functions, so by \cite{Dries} limits cannot increase in dimension. 
%
%
The last family of group is not definable, since $e^{zt}$ is not definable in any O-minimal structure.   There are (up to taking transpose) two families of groups of this sort: \[W_{1,2} ^z := \begin{pmatrix}  e^ {zt} & 0 \\
0 & e ^{-2\mathfrak{R}(z)t}
\end{pmatrix}, \qquad \qquad \text{ and }\qquad \qquad \;W_{3,8/9} ^z:=\begin{pmatrix}  e^ {zt} & \ast \\
0 & e ^{-2\mathfrak{R}(z)t}
\end{pmatrix}.\]


  %
 We will show both these families of groups have limits which stay constant in dimension.  
 Lemma \ref{W12} 
 treats the one parameter group $W_{1,2} ^z$ and Lemma \ref{W389} completes the proof for $W_{3,8/9} ^z$. 
Both lemmas are corollaries of Theorem \ref{1d limits}.

  The inspiration for Theorem \ref{1d limits} is \editA{\cite[section 3.2]{CDW}} which gives the following example of a limit $p_n H p_n^{-1} \to L$ :  
  \[  H= \begin{pmatrix} 
  1 & t & 0 & 0 \\
  0 & 1 & 0 & 0 \\
  0 & 0 & \cos t & \sin t \\
  0 & 0 & - \sin t & \cos t 
  \end{pmatrix}
  \qquad 
  p_n = 
  \begin{pmatrix} n^{-1} & 0 & 0 & 0\\
  0 & n & 0 & 0 \\
   0 & 0 & 1 & 0 \\
   0 & 0 & 0 & 1
  \end{pmatrix} 
  \qquad 
  L = \begin{pmatrix} 
  1 & s & 0 & 0 \\
  0 & 1 & 0 & 0 \\
  0 & 0 & \cos t & \sin t \\
  0 & 0 & - \sin t & \cos t 
  \end{pmatrix}. 
   \] 
  This example satisfies $\dim (L) = 2> 1=\dim(H)$. The group $H$ is a one-parameter subgroup of $L\simeq\RR \times \mathbb{S}^1$  that looks like a helix. Conjugating by $p_n$ coils the helix more tightly so that the orbits of $H$ accumulate. 
One obstruction for such a phenomenon is given by Theorem \ref{1d limits}, which we prove next. 


\begin{proof}[Proof of Theorem \ref{1d limits}]
	Let us denote by $B$ the Borel subgroup of $\GL_d(\CC)$ of upper triangular matrices, and let $\frakb$ be its corresponding Lie subalgebra. Since we work over $\CC$ we can replace $X$ by its conjugate Jordan form, which is an upper triangular matrix. We therefore assume without loss of generality that $X\in \frakb$.
	Let us denote by $\alpha$ the non purely imaginary eigenvalue  of $X$, and by $v\in \CC^d$ its eigenvector.
	
	The map $t\mapsto \|\exp(tX)v\| = \|e^{t\alpha} v\|=e^{t \Re(\alpha)}\|v\|$ is a homeomorphism since $\Re(\alpha)\ne 0$. It follows that the map $\RR \to \GL_d(\CC)$  given by $t\mapsto \exp(tX)$ is proper, and $H$ is closed. 

	Now let $g_n \in \GL_d(\CC)$ be a sequence such that $g_n H g_n^{-1} \to L$ in $\Sub(\GL_d(\CC))$, and suppose for contradiction that $\dim L > \dim H=1$. 
	Denote by $H_n = g_n H g_n^{-1}$ and $X_n= g_n X g_n ^{-1}$. So $g_n H g_n^{-1} = \{ \exp (tX_n) | t\in \RR\}$.
	By the Iwasawa decomposition $\GL_d (\CC) = U_d  \cdot B $ where $U_d  $ is the unitary group.
	Write $g_n = u_n b_n$ for $u_n\in U_d  $ and $b_n \in B$.  Since the unitary group is compact, we may assume, up to passing to a subsequence, that $u_n \to u$.  Thus $b_n H b_n^{-1} \to u^{-1} L u$.  Therefore, we may assume without loss of generality that $g_n \in B$, and hence also $X_n = g_n X g_n^{-1} \in \frakb$.

	It follows from $\dim L > \dim H$ that for every small enough identity neighborhood $I\in V \subset \GL_d(\CC)$, the number of components of $V \cap H_n $ goes to infinity.
	In particular, $V \cap H_n $ has more than one component for all large enough $n$.  So $H_n$ leaves $V$ and returns to it, and we have 
	\begin{equation}
	\text{ there exist $0<s_n<t_n$ such that $\exp(s_n X_n)\notin V$ but $\exp(t_n X_n) \in V$.} \tag{$\spadesuit$}\label{property}
	\end{equation}
	
	Fix some norm $\|\cdot \|$ on $\frakgl_d(\CC)$ and let $B_\varepsilon $ be the ball of radius $\varepsilon $ in $\frakgl_d(\CC)$ around $0$. Denote by $V_\varepsilon  = \exp(B_\varepsilon )$. 	
	The exponential map $\exp:\frakgl_d(\CC)\to\GL_d(\CC)$ is a local homeomorphism at $0$. Let $\varepsilon _0$ be small enough so that $\exp:B_{\varepsilon _0} \to V_{\varepsilon _0}$ is a homeomorphism and $V_{\varepsilon _0}$ is open.
	These neighborhoods have the following \textbf{trapping property:} 
	for every $0<\delta<\varepsilon _0$, and $0<t$ and $Y\in \frakgl_d(\CC)$, if $\exp(sY)\in V_{\varepsilon _0}$ for all $s\in [0,t]$ and $\exp(t Y) \in V_\delta$ then $\exp(sY)\in V_\delta$ for all $s\in [0,t]$. 
	The following upgraded form of \eqref{property} follows: for every $\delta>0$ and large enough $n$,
	\begin{equation}
	\text{ 	there exist $0<s_n<t_n$ such that  $\exp(s_n X_n)\notin V_{\varepsilon _0}$ but $\exp(t_n X_n) \in V_\delta$. } \tag{$\clubsuit$}\label{property2}
	\end{equation}

	Let $\varepsilon  = \frac{\varepsilon _0}{2}$. For every $\delta\in (0,\varepsilon )$, let $t_n>0$ be the first return of the curve $t \mapsto \exp(t X_n)$ to $\overline{V_\delta}$ after leaving $V_{\varepsilon _0}$. 
	\begin{claim} 
		$\exp(\frac{t_n}{2} X_n) \notin V_{\varepsilon }$. 
	\end{claim}
	Assume for contradiction that $\exp(\frac{t_n}{2} X_n) \in V_{\varepsilon }$. 
	We first show that $\exp(\frac{t_n}{2} X_n) \in \overline{V_\delta}$.
	Indeed, let $Y\in B_\epsilon$ be such that $\exp(Y)=\exp(\frac{t_n}{2} X_n)$. For all $s\in[0,2]$, $\exp(sY) \in V_{\epsilon_0}$ (since $\epsilon = \epsilon _0 /2$), and  \[\exp(2Y) = \exp(Y)^2=\exp(\frac{t_n}{2} X_n)^2=\exp(t_n X_n)\in  \overline{V_\delta}.\] It follows by the trapping property that $\exp(Y)=\exp(\frac{t_n}{2} X_n) \in \overline{V_\delta}$. 
	
	Recall that $t_n$ is defined as the first return of $t\mapsto \exp(t X_n)$ to $\overline{V_{\delta}}$ after leaving $V_{\varepsilon _0}$. Therefore $\exp(s X_n) \in V_{\varepsilon _0}$ for all $s\in [0,\frac{t_0}{2}]$.
	By the trapping property, it follows that $\exp(s X_n) \in \overline{V_{\delta}}$ for all $s\in [0,\frac{t_0}{2}]$. Hence, $\exp(s X_n) = \exp(\frac{s}{2} X_n)^2 \in \overline{V_{\delta}}^2 \subseteq V_{\varepsilon _0}$ for all $s\in [0,t_0]$. This contradicts the assumption that the curve leaves $V_{\varepsilon _0}$, and proves the claim.
	
	Thus we may further upgrade \eqref{property2}. For every $\delta\in(0,\varepsilon )$, and large enough $n$,
	\begin{equation}
	\text{ 	there exist $0<t_n$ such that  $\exp\left(\frac{t_n}{2} X_n\right)\notin V_{\varepsilon }$ but $\exp(t_n X_n) \in V_\delta$. } \tag{$\diamondsuit$}\label{property3}
	\end{equation} 
	
	By choosing an appropriate subsequence, assume $h_n = \exp\left(\frac{t_n}{2} X_n\right)$ satisfies \eqref{property3} for $\delta = \frac{1}{n}$. In particular, $h_n \nrightarrow 1$ but $h_n^2 \to 1$.
	Let $\alpha_1=\alpha, \alpha_2, \ldots, \alpha_d$  be the eigenvalues of $X$ and thus also of $X_n = g_n X g_n^{-1}$. By definition of $h_n$ it follows that $e^{t_n\alpha }$ is an eigenvalue of $h_n^2= \exp(t_n X_n)$. Since $h_n^2\to 1$, we have $e^{t_n\alpha} \to 1$. From the assumption $\Re(\alpha)\ne 0$  it follows that $t_n \to 0$.
	It follows that the eigenvalues of $h_n$, namely $e^{\frac{t_n}{2}\alpha_1}, \ldots, e^{\frac{t_n}{2}\alpha_d}$ tend to $1$ as well.

	
	We get a sequence of matrices $h_n\in B$ satisfying
	\begin{equation}
		\text{ $h_n^2 \to 1$, $h_n \nrightarrow 1$, and all eigenvalues of $h_n$ tend to $1$.} \tag{$\heartsuit$}\label{property4}
	\end{equation} 

Let us suppress indices and write $h=h_n$. The matrix $h$ is upper triangular so we may write 
\[ h=  \begin{pmatrix}  x _{11} & x_{12} & \cdots & x_ {1d}  \\
0 & x_{22} & \cdots & x_{2d} \\
 \vdots &  & \ddots & \vdots  \\
 0 & 0 & \cdots & x_{dd} 
\end{pmatrix}
\qquad 
h^2=  \begin{pmatrix}  x _{11} ^2 & x_{12}(x_{11} + x_{22}) & \cdots & \sum_{k=1}^d  x_{1k}x_{kd}   \\
0 & x_{22} ^2  & \cdots & \sum_{k=1}^d  x_{2k}x_{kd}  \\
 \vdots &  & \ddots & \vdots  \\
 0 & 0 & \cdots & x_{dd} ^2 
\end{pmatrix}. 
 \] 
By assumption, as $n\to \infty$, the eigenvalues of $h$ tend to $1$, i.e $x_{ii}\to 1$ for all $1 \leq i \leq d$.  We also have $h^2\to 1$ as $n\to\infty$. In particular, looking at the super-diagonal entries of $h^2$ we have $x_{i, i+1} (x_{ii} + x_{i+1, i+1}) \to 0$. But since $x_{ii}+x_{i+1,i+1} \to 2$ we can deduce that $x_{i,i+1} \to 0$ for all $1\le i \le d-1$. Proceeding by induction to next diagonal, we get $x_{i,j} \to 0$ for all $1\le i<j\le d$. We conclude that $h \to 1$ as $n\to\infty $. However this contradicts \eqref{property4}, and completes the proof.
\end{proof}


  \begin{lemma} \label{W12} Limits of $W_{1,2} ^z$ are 1 dimensional. 
  \end{lemma} 
  \begin{proof} 
  The group	$W_{1,2}^z$ is given by $\{\exp(tX)\;|\;t\in \RR\}$ for 
  \[X=\begin{pmatrix}  
  a & b  & 0 \\ 
  -b & a& 0 \\ 
  0 & 0 &-2a  
  \end{pmatrix}\in \fraksl_3(\RR)\]
  with $a\ne 0, b \in \RR$.
  This matrix has a non-zero real eigenvalue. As a subgroup of $\GL_3(\CC)$ this satisfies the assumptions of Theorem \ref{1d limits}, and therefore every local limit of this subgroup under conjugation in $\GL_3(\CC)$ is one-dimensional. In particular, the same conclusion also holds for conjugates of the closed subgroup $ W_{1,2} ^z $ in $ \SL_3(\RR)$, since $\SL_3(\RR)$ is a closed subgroup of $\GL_3(\CC)$.
  \end{proof} 
  
%
%
%
 

	\begin{lemma}\label{W389} Limits of  $W_{3,8/9} ^z$ are 3 dimensional. 
	\end{lemma}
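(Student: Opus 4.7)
The plan is to extend the argument of the previous lemma to the three-dimensional setting. Write $H = W_{3,8/9}^z = N\rtimes A$, where $N = \bigl\{\bigl(\begin{smallmatrix} I & w \\ 0 & 1\end{smallmatrix}\bigr):w\in\RR^2\bigr\}$ is the two-dimensional unipotent normal subgroup (every $W\in\mathfrak{n}$ satisfies $W^2=0$) and $A = W_{1,2}^z$. Each $h\in H$ decomposes uniquely as $\exp(tX_0)\exp(W)$ with $X_0$ a generator of $\mathfrak{a}$ and $W\in\mathfrak{n}$. The lower bound $\dim L\geq 3$ follows by passing to a subsequence so that $\Ad(g_n)\mathfrak{h}$ converges in the Grassmannian of $3$-planes to a Lie subalgebra whose exponential lies in $L$.

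For the upper bound, the case $z = bi$ purely imaginary is handled directly: $W_{3,8/9}^{bi} = \SO(2)\ltimes\RR^2$ is an algebraic subgroup of $\SL_3(\RR)$ isomorphic to $E(2)$, so Theorem~\ref{algebraic} gives $\dim L \leq 3$. When $\Re(z)\neq 0$, I would argue by contradiction exactly as in the previous lemma. Assume $\dim L > 3$. The same dimensional count forces the number of connected components of $U\cap g_n H g_n^{-1}$ to tend to infinity in every sufficiently small identity neighborhood $U$, producing $h_n = \exp(t_n X_0)\exp(W_n)\in H$ with $g_n h_n g_n^{-1}\to 1$ lying outside the identity component of $U\cap g_n H g_n^{-1}$. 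Consider the path $\gamma_n(s) = \exp(st_n X_0)\exp(sW_n)\in H$ for $s\in[0,1]$, which joins the identity to $h_n$; its conjugate $g_n\gamma_n(s)g_n^{-1}$ is a path in $g_n H g_n^{-1}$ from the identity to $g_n h_n g_n^{-1}$, and the contradiction will come from showing this path stays inside $U$.

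The $A$-factor $g_n\exp(st_n X_0)g_n^{-1}$ stays in a prescribed neighborhood by the positive-translation-length argument of the previous lemma, which applies since $\Re(z)\neq 0$. The $N$-factor is tame because $\Ad$ preserves the relation $W_n^2 = 0$, giving $g_n\exp(sW_n)g_n^{-1} = I + s\Ad(g_n)W_n$ linear in $s$, so smallness at $s = 1$ propagates uniformly to the whole interval. The main obstacle is to separate the smallness of the product $g_n h_n g_n^{-1}\to 1$ into smallness of each factor. I would obtain this from the transversality of $A$ and $N$: the Lie algebras $\Ad(g_n)\mathfrak{a}$ and $\Ad(g_n)\mathfrak{n}$ remain transverse in the Grassmannian limit (both converging to complementary subspaces of a $3$-dimensional subalgebra), so the multiplication map $(g_n A g_n^{-1})\times(g_n N g_n^{-1})\to g_n H g_n^{-1}$ is a diffeomorphism with uniformly controlled inverse. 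Combined with the conjugacy-invariance of eigenvalues—forcing $|t_n|\to 0$ via the eigenvalue $e^{-2\Re(z)t_n}$ of $g_n h_n g_n^{-1}$ being forced close to $1$—this splits the smallness into the two factors and closes the argument.
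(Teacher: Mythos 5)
Your handling of the purely imaginary case via Theorem \ref{algebraic} and your lower bound $\dim L\geq 3$ are fine, but the main step for $\Re(z)\neq 0$ has a genuine gap: the transversality claim on which the whole splitting rests is false. Take $g_n = I+nE_{23}$ (upper triangular, so an Iwasawa reduction would not exclude it, and in fact $g_n$ even normalizes $H$). Then $\Ad(g_n)$ fixes $\mathfrak{n}=\langle E_{13},E_{23}\rangle$ pointwise, while for the generator $X_0$ of $\mathfrak{a}$ one computes $\Ad(g_n)X_0 = X_0 - nb\,E_{13}-3an\,E_{23}$ (writing $z=a+bi$, $b\neq 0$). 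Hence the line $\Ad(g_n)\mathfrak{a}$ converges in the Grassmannian to $\RR(bE_{13}+3aE_{23})\subset \mathfrak{n}=\lim \Ad(g_n)\mathfrak{n}$: the limits are \emph{not} complementary, and there is no uniformly controlled inverse of the multiplication map near the identity. Concretely, with $t_n=1/n$ and $W_n=bE_{13}+3aE_{23}$ the element $h_n=\exp(t_nX_0)\exp(W_n)$ satisfies $g_nh_ng_n^{-1}\to I$, yet $g_n\exp(t_nX_0)g_n^{-1}\to I-bE_{13}-3aE_{23}$ and $g_n\exp(W_n)g_n^{-1}=I+bE_{13}+3aE_{23}$, neither of which is small. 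This also shows that your eigenvalue observation is a non sequitur: it does force $t_n\to 0$, but $\Ad(g_n)(t_nX_0)$ can still converge to a nonzero nilpotent element, so $t_n\to 0$ does not make the conjugated $A$-factor small. The argument as written therefore does not close.

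The paper's proof sidesteps exactly this difficulty. After the Iwasawa reduction it works inside the parabolic $Q=W_{6,1}$ and applies the block homomorphism $p:Q\to\GL_2(\RR)$, which kills the troublesome $2$-dimensional normal subgroup $\ker p\leq H$ and sends $H$ to the one-parameter group $\bar H=(e^{zt})$, where the translation-length argument of the previous lemma does apply; then $L\leq p^{-1}(\bar L)$ gives $\dim L\leq 1+2=3$. To salvage your direct approach you would need to run the component-counting and path argument for the quotient $p(g_nHg_n^{-1})$ rather than for $g_nHg_n^{-1}$ itself --- which is in effect what the paper does.
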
 
	
	\begin{proof} 
		Set $H=W^z_{3,8}$ for some $z\notin i\RR$, and assume that $p_n H p_n^{-1} \to L$ we wish to show that $\dim L=3$. We first claim we can reduce to the case that the $p_n$ are upper triangular. Use the Iwasawa decomposition to write $G=KB$ where $K=\SO(3)$ and $B$ is the subgroup of upper triangular matrices in $\SL_3(\RR)$. Hence we can write $p_n = k_n b_n$ for $k_n\in K$ and $b\in B$. By compactness of $K$ we may assume, up to passing to a subsequence, that $k_n \to k$. Thus $b_n H b_n^{-1} \to k^{-1} L k$. Since $\dim k^{-1} L k = \dim L$, the claim follows.
		
		Now, both $H$ and $B$ are contained in the parabolic subgroup 
		\[Q = W_{6,1}= \begin{pmatrix} 
		* & *  & * \\
		* & *  & * \\
		0 & 0 & *
		\end{pmatrix},\]
		and hence it suffices to look at limits in $\Sub(Q)$. 
		Let $p: Q \to \GL_2 (\RR)$ be the homomorphism sending a matrix in $Q$ to its upper left $2\times2$ block.  The homomorphism $p$ induces a pullback map $p^*: \Sub(\GL_2(\RR)) \to \Sub(Q)$, by $p^*(\tilde{A})=p^{-1}(\tilde{A})$. The map $p^*$ is a homeomorphism between $\Sub(\GL_2(\RR))$) and $\{A  \in \Sub(Q) | A  \ge \ker p\}$, as it has an inverse $p_*: \{ A \in \Sub(Q) | A  \ge \ker p\} \to \Sub(\GL_2(\RR))$ defined by $p_*(A)=p(A)$. The group $H$ contains $\ker p= \left(\begin{smallmatrix} 1 & 0 & *\\ 0 & 1 & *\\ 0 & 0 & 1 \end{smallmatrix}\right)$.
		The image of $H$  under this homeomorphism is the 1-dimensional subgroup $\tilde{H}=(e^{zt}) \in \Sub(GL_2(\RR))$. 
		 By Theorem \ref{1d limits} limits of $\tilde H$ in $\Sub(\GL_2(\RR)$ are 1 dimensional since $z \not \in i \RR$ by assumption.  
		Let $L$ be a limit of conjugates of $H$, then $L\le p^*(\tilde{L})$ for some limit  $\tilde{L}$ of conjugates of $\tilde{H}$. Since $\ker p$ is 2-dimensional, and $\bar L$ is 1-dimension, then $p^*(\tilde{L})$ is 3-dimensional. Hence $L$ is 3-dimensional.
	\end{proof} 

	We will also extensively use the following propositions from \cite{CDW, CLV} to identify which subgroups cannot limit to other subgroups.

 Denote the normalizer of a subgroup $H \leq G$ by $N_G(H)$.  Denote the connected component of the identity by $H_0$.  The next theorem says that the dimension of the normalizer increases under taking a limit. 

\begin{proposition}[\editA{\cite[Proposition 3.2]{CDW}}]\label{normalizer} Let $G$ be an algebraic Lie group (defined over $\CC$ or $\RR$), let $H$ be an algebraic
subgroup and let $L$ be any limit of $H$. Then $\dim N_G(H_0) \leq \dim N_G(L_0)$ with equality if and only if
$L$ and $H$ are conjugate.
\end{proposition}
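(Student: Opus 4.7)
The plan is to transport the Chabauty convergence to the Grassmannian of $d$-dimensional subspaces of $\mathfrak{g}$ via the adjoint action, and then apply the standard orbit-closure dimension inequality from algebraic geometry. By Theorem \ref{algebraic}, $L$ is algebraic and $\dim L = \dim H$, so the identity components $H_0$ and $L_0$ share a common dimension $d$. Since Chabauty convergence of connected subgroups of fixed dimension corresponds (via $\exp$ near $1$) to convergence of Lie subalgebras in the Grassmannian, we obtain $\Ad(g_n)\mathfrak{h} \to \mathfrak{l}$ in $\mathrm{Gr}(d,\mathfrak{g})$, where $\mathfrak{h}$ and $\mathfrak{l}$ are the Lie algebras of $H_0$ and $L_0$.

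The group $G$ acts algebraically on $\mathrm{Gr}(d,\mathfrak{g})$ through $\Ad$, and because a connected Lie subgroup is determined by its Lie algebra, the stabilizer of $\mathfrak{h}$ is exactly $N_G(H_0)$. Hence the orbit $G\cdot \mathfrak{h}$ is a constructible set of dimension $\dim G - \dim N_G(H_0)$, and similarly $\dim(G\cdot \mathfrak{l}) = \dim G - \dim N_G(L_0)$. Since $\mathfrak{l} \in \overline{G\cdot \mathfrak{h}}$ and this closure is $G$-invariant, the entire orbit $G\cdot \mathfrak{l}$ lies in $\overline{G\cdot \mathfrak{h}}$. The classical fact that an orbit contained in the closure of another has no greater dimension then gives
\[
\dim G - \dim N_G(L_0) \;\le\; \dim G - \dim N_G(H_0),
\]
which is the inequality $\dim N_G(H_0) \le \dim N_G(L_0)$.

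For the equality case, if the two orbit dimensions agree then $G\cdot \mathfrak{l}$ is open in the irreducible variety $\overline{G\cdot \mathfrak{h}}$, forcing $G\cdot \mathfrak{l} = G\cdot \mathfrak{h}$. Hence $\mathfrak{l} = \Ad(g)\mathfrak{h}$ for some $g\in G$, and so $L_0 = gH_0 g^{-1}$. To upgrade this to $L = gHg^{-1}$ (with $H$ connected, as in our application, so $H = H_0$), one must show that $L$ cannot acquire extra components in the limit: every $\ell\in L$ arises as $g_n h_n g_n^{-1}\to \ell$ with $h_n = \exp(X_n)$ in the connected group $H$, and a reparametrization of the one-parameter family $g_n \exp(sX_n) g_n^{-1}$ together with compactness produces a continuous path in $L$ from $1$ to $\ell$, placing $\ell$ in the identity component $L_0$.

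I expect the main obstacle to be this last step in the equality case: the inequality itself falls out immediately from the Grassmannian orbit-dimension argument, but keeping the interpolating path $g_n \exp(sX_n) g_n^{-1}$ bounded when the parameters $X_n$ may blow up requires a careful reparametrization, chosen so that bounded conjugates produce a genuine path in $L_0$ from the identity to $\ell$.
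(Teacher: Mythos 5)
Your main line --- pushing the Chabauty limit into the Grassmannian of $d$-planes in $\mathfrak{g}$, identifying the $\Ad$-stabilizer of $\mathfrak{h}$ with $N_G(H_0)$, and using the dimension drop on the frontier of an orbit of an algebraic action --- is correct and is essentially the argument behind the cited result (note the paper does not reprove this proposition; it quotes it from \cite{CDW}, and separately records the fact you need, that for limits of constant dimension the Chabauty limit corresponds to the $\Ad(G)$-limit of Lie algebras). One cosmetic repair: over $\RR$ the orbit $G\cdot\mathfrak{h}$ is a semialgebraic set, not an open subset of an irreducible variety, so the clean statement is that the frontier $\overline{G\cdot\mathfrak{h}}\setminus G\cdot\mathfrak{h}$ is $G$-invariant and semialgebraic of strictly smaller dimension; this gives both the inequality and its strictness whenever $G\cdot\mathfrak{l}\neq G\cdot\mathfrak{h}$.

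The genuine gap is your final step. The statement you try to prove there --- that a Chabauty limit of a connected subgroup is connected, via a reparametrized path $g_n\exp(sX_n)g_n^{-1}$ --- is false, so no reparametrization can save it. Already in $\SL_2(\RR)$, conjugating $\SO(2)$ by $\mathrm{diag}(n,n^{-1})$ yields the limit $\{\pm I\}\cdot U$ with $U$ the upper unitriangular subgroup: this has two components, $-I$ lies in the limit but not in its identity component, and the interpolating arcs $g_n\exp(s\pi X)g_n^{-1}$ leave every compact set at $s=\tfrac12$. The same phenomenon occurs in this paper for $\SO(3)\le\SL_3(\RR)$, whose local limit is $W_{3,4}$ while the actual Chabauty limit is disconnected. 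Connectedness of $L$ does hold in the equality case, but it must be extracted from the orbit coincidence rather than from a path argument: once $G\cdot\mathfrak{l}=G\cdot\mathfrak{h}$ you may assume $\mathfrak{l}=\mathfrak{h}$ after conjugating; since orbits of algebraic actions are embedded, $\Ad(g_n)\mathfrak{h}\to\mathfrak{h}$ forces $g_n=\epsilon_n m_n$ with $\epsilon_n\to 1$ and $m_n\in N_G(H_0)=N_G(H)$ (using $H=H_0$), whence $g_nHg_n^{-1}=\epsilon_nH\epsilon_n^{-1}\to H$ and $L=H$ exactly. Finally, your argument only covers connected $H$, whereas the proposition is stated for arbitrary algebraic subgroups; for the way the paper uses it (comparing normalizers of identity components of local limits) this restriction is harmless, but it should be flagged.
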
 

The same statement works for non-algebraic groups as well as long as the dimension does not increase when taking a limit.  In this case, taking a local limit of groups by conjugation is equivalent to taking a limit of their Lie algebras by $\Ad(G)$ action, and the same proof idea works.


Any element $A \in \mathfrak{gl}(n)$ has a well defined characteristic polynomial, denoted $char(A)$.
Given a Lie subalgebra $\mathfrak{h} \subseteq \mathfrak{gl}(n)$, 
we denote by $Char(\mathfrak{h})$ the closure of the subset $\{ char(A) : A \in \mathfrak{h} \} \subset \RR[x]$ . Thus $Char(\mathfrak{h})$ is closed and invariant under
conjugation of $\mathfrak{h}$. The next proposition implies that limits 
have smaller sets of characteristic polynomials. 

\begin{proposition}[\editA{\cite[Proposition 3.4]{CDW}}]\label{char}
Suppose $H$ is a closed algebraic subgroup of $\GL_n(\RR)$, and $L$ is a conjugacy limit
of $H$. Then $Char(\mathfrak{l}) \subseteq Char(\mathfrak{h})$, where $\mathfrak{h}, \mathfrak{l}\subset  \mathfrak{gl}(n)$ 
denote the Lie algebras of $H$ and $L$ respectively.
\end{proposition}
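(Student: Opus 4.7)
The plan is to lift the Chabauty convergence of groups to a convergence of Lie subalgebras of $\mathfrak{gl}(n)$ under the adjoint action, and then exploit the fact that the characteristic polynomial map $\mathrm{char}:\mathfrak{gl}(n)\to\RR[x]$ is both continuous and invariant under conjugation.

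More precisely, suppose $g_n H g_n^{-1}\to L$ in the Chabauty topology. By Theorem \ref{algebraic} we have $\dim L=\dim H$, and by the remark following Proposition \ref{samedim} the convergence on the level of subgroups is equivalent to convergence of the associated Lie subalgebras, i.e.\ $\mathfrak{l}_n:=\Ad(g_n)\mathfrak{h}\to\mathfrak{l}$ in the Grassmannian of $(\dim\mathfrak{h})$-dimensional subspaces of $\mathfrak{gl}(n)$. In particular, for every $X\in\mathfrak{l}$ we can pick $X_n\in\mathfrak{l}_n$ with $X_n\to X$, and write $X_n=\Ad(g_n)Y_n=g_n Y_n g_n^{-1}$ for some $Y_n\in\mathfrak{h}$.

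Since the characteristic polynomial is invariant under conjugation, $\mathrm{char}(X_n)=\mathrm{char}(Y_n)$, which lies in the set $\{\mathrm{char}(A):A\in\mathfrak{h}\}$ whose closure is $\mathrm{Char}(\mathfrak{h})$. Continuity of $\mathrm{char}$ then gives
\[
\mathrm{char}(X)=\lim_{n\to\infty}\mathrm{char}(X_n)\in\mathrm{Char}(\mathfrak{h}).
\]
Thus $\{\mathrm{char}(X):X\in\mathfrak{l}\}\subseteq\mathrm{Char}(\mathfrak{h})$, and since $\mathrm{Char}(\mathfrak{h})$ is closed by definition, taking closures yields $\mathrm{Char}(\mathfrak{l})\subseteq\mathrm{Char}(\mathfrak{h})$.

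The only subtle point, and the one I would expect to be the main obstacle in writing this carefully, is justifying the passage from Chabauty convergence of subgroups to convergence of their Lie subalgebras in the Grassmannian; this requires the equal-dimension input of Theorem \ref{algebraic}, since without it the limiting algebra could have different dimension and the pointwise approximation of elements of $\mathfrak{l}$ by elements of $\Ad(g_n)\mathfrak{h}$ would need to be argued from scratch via $\exp$.
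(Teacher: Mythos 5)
Your argument is correct and is essentially the standard proof of this statement; note that the paper itself does not prove Proposition~\ref{char} but imports it verbatim from \cite{CDW}, where the argument is exactly the one you give (conjugation-invariance plus continuity of the characteristic polynomial, applied to a sequence $X_n\in\Ad(g_n)\mathfrak{h}$ converging to a given $X\in\mathfrak{l}$). The subtle point you flag --- that Chabauty convergence of the groups yields convergence of the Lie algebras in the Grassmannian --- is indeed where the algebraicity hypothesis enters via the equal-dimension statement of Theorem~\ref{algebraic}, and your treatment of it is the intended one.
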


The next proposition implies that limits of abelian groups are abelian.  A group $H$ satisfies a universal relation
if there is a finitely generated free group $F$ and a word $w \in F$ such that for all homomorphisms
$\theta : F \to H$ we have $\theta (w) =1$. 

\begin{proposition}[\editA{\cite[Proposition 2.2]{CLV}} idea due to Daryl Cooper] \label{univ} If $H \leq G$ satisfies a universal relation, $w$, then so does every $G$-conjugacy limit
$L$ of $H$.
\end{proposition}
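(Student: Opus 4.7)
The plan is to lift an arbitrary homomorphism $\phi\colon F\to L$ to an approximating sequence of homomorphisms into $H$ via conjugates, use the universal relation in $H$, and then pass to the limit using continuity of the word map.

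First I would fix generators $x_1,\dots,x_k$ of $F$ so that the word $w$ becomes $w(x_1,\dots,x_k)$, and view $w$ as a continuous map $G^k\to G$. Suppose $g_n H g_n^{-1} \to L$ in the Chabauty topology, and let $\phi\colon F \to L$ be any homomorphism, determined by its values $\ell_i := \phi(x_i)\in L$. We must show $w(\ell_1,\dots,\ell_k)=1$.

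By the first clause of Chabauty convergence, for each $i$ there exists a sequence $h_n^{(i)}\in H$ such that $g_n h_n^{(i)} g_n^{-1} \to \ell_i$ as $n\to\infty$. Since $H$ is a group and $h_n^{(1)},\dots,h_n^{(k)}\in H$, the assignment $x_i\mapsto h_n^{(i)}$ extends to a homomorphism $\theta_n\colon F \to H$, and the hypothesis that $w$ is a universal relation in $H$ gives
\[
w\bigl(h_n^{(1)},\dots,h_n^{(k)}\bigr)=1.
\]
Conjugating by $g_n$ and using that the word map commutes with conjugation yields
\[
w\bigl(g_n h_n^{(1)} g_n^{-1},\dots,g_n h_n^{(k)} g_n^{-1}\bigr) = g_n \cdot 1 \cdot g_n^{-1} = 1.
\]
Letting $n\to\infty$ and using continuity of $w\colon G^k \to G$, the left side tends to $w(\ell_1,\dots,\ell_k)$, so $w(\ell_1,\dots,\ell_k)=1$. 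Since $\phi$ was arbitrary, $L$ satisfies the universal relation $w$.

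There is no real obstacle here: the argument is entirely formal, relying only on the continuity of the word map on $G$ and on the fact that Chabauty convergence lets us simultaneously approximate finitely many elements of $L$ by conjugated elements of $H$. The only minor point worth stating explicitly is that the approximation can be chosen independently for each generator $x_i$, which is immediate from the definition of Chabauty convergence.
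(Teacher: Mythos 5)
Your argument is correct and is essentially the same as the one in the cited source \cite{CLV}: the paper itself only quotes the proposition, and the underlying proof is exactly this approximation of the generators' images via Chabauty convergence, followed by equivariance of the word map under conjugation and its continuity on $G^k$. No gaps.
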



To organize the local limit charts of Theorem \ref{main}, we note that by Proposition \ref{samedim} we can treat each dimension separately. In view of  Proposition \ref{normalizer}, normalizers of (non-conjugate) limits must increase in dimension, it is therefore convenient to arrange the columns (or rows) of the chart by the dimension of the normalizer. Arrows thus can only go to the right (or down if arranged by rows).  
To complete the proof in each section, we need to provide a conjugating sequence of matrices for each arrow in the chart, and prove nonexistence of any arrows from left to right (up to down), which we do using the remainder of the theorems and propositions from this section. 

\section{The classification of local limits in $\SL_3(\RR)$}\label{classification}

\subsection*{Dimension 1}

 We begin with a table of the one-parameter subgroups of $\SL_3(\RR)$ and their properties. After the table, the terminology \emph{singular} and \emph{non-singular} and other conventions used in the table are explained.

$$\begin{array}{c|c|c|c} 
\textrm{Name} & \textrm{Group} & \textrm{Normalizer} 
&\textrm{Properties} \\  
\hline 
W_{1,1}^{(a,b)} & \begin{array}{c}  \begin{pmatrix} e^{at} & 0 & 0 \\ 0 & e^{bt} & 0 \\ 0 & 0 & e^{-(a+b)t} \end{pmatrix}\\ (a,b)\in\mathbb{R}\setminus\{0\} \end{array}  & \begin{array}{c} 
W_{2,2} \text{ if  nonsingular}  \\
W_{4,1}\text{ if singular}
  \end{array} 
 &
\begin{array}{c} 
\textrm{definable} \\ \textrm{algebraic}
\end{array} \\
\hline 
W_{1,2} ^z & \begin{array}{c}  \begin{pmatrix} e^{zt} & 0 \\ 0 & e^{-2\Re(z)t} \end{pmatrix} \\ z\in \mathbb{C} \end{array}  & 
W_{2,1}
& \begin{array}{c}
\text{limits 1 dimensional} \\ \text{algebraic if }z\in i\RR \end{array}\\
\hline 
W_{1,3} & 
\begin{pmatrix} e^{t} & te^{t} & 0 \\ 0 & e^{t} & 0 \\ 0 & 0 & e^{-2t} \end{pmatrix} 
& 
W_{2,3}
& \textrm{definable} \\
\hline 
W_{1,4} & \begin{pmatrix} 1 & t & 0 \\ 0 & 1 & 0 \\ 0 & 0 & 1 \end{pmatrix} & 
\text{conjugate to }W_{5,3}
 & \textrm{algebraic} \\
\hline 
W_{1,5} & \begin{pmatrix} 1 & t &\frac{ t^2}{2} \\ 0 & 1 & t \\ 0 & 0 & 1 \end{pmatrix}
  & 
W_{3,7}&
  \textrm{algebraic } \\
\end{array} $$

 The group $W_{1,1}^{(a,b)}$ depends on a choice of two real numbers $a, b \in \RR$. 
 \editA{If none of the weights $a$, $b$ and $-a-b$ match, the group is \emph{nonsingular}.  If any of the weights match, i.e. if $a=b$, $a = -a-b$ or $b = -a-b$, it is called \emph{singular}.} 
	We abuse notation and denote the nonsingular case by $W_{1,1}^ {a \neq b}$ and the singular case $W_{1,1}^ {a=b}$, and use this notation throughout the rest of the paper. Note that if $z\in \RR$ then $W_{1,2}^z = W_{1,1}^{(a,a)}$. Therefore we abuse notation in writing $W_{1,2} ^z$ to assume also $z\notin \RR$. 

The possible local limits of each group are represented in the following transitive chart. 
\begin{center}
\begin{tikzcd} 
W_{1,1}^{a \neq b} \arrow[rd] & & W_{1,1}^{a=b} \arrow[rd]  \\
 W_{1,2}^z  \arrow[r ]   & W_{1,5} \arrow[rr]&  & W_{1,4}\\
W_{1,3} \arrow[ru] \arrow[rruu,bend right= 20]\\
\end{tikzcd} 
\end{center}
Another way to see the local limits of one parameter groups is to take a sequence of elements in the groups that converges to the limit.  However, we wanted all the sections of the paper to be consistent. So,
we give a sequence of conjugating matrices,  $p_n$ in the sense of Definition \ref{conv}, for each arrow that appears in the chart: 
\[ \begin{array}{c} W_{1,5} \to W_{1,4}\\  
\begin{pmatrix}
\frac{1}{n} & 0 & 0\\
0 & \frac{1}{n} & 0 \\
0& 0 & n^2 
\end{pmatrix}
\end{array} 
\qquad
\begin{array} {c} 
W_{1,3} \to W_{1,5}: \\
\begin{pmatrix}
n& 0 & \frac{n}{9}\\
0 & 1 & \frac{-1}{3} \\
0& 0 & \frac{1}{n}
\end{pmatrix}
\end{array} 
\qquad 
\begin{array}{c} 
W_{1,1}^{a\neq b}  \to W_{1,5} : \\
\begin{pmatrix}
1& n & \frac{(a - b)^2 n^2}{2 a^2 + 5 a b + 2 b^2}\\
0 & 1 & \frac{(a - b) y}{a + 2 b} \\
0& 0 & 1
\end{pmatrix}
\end{array} 
\]

\[
\begin{array}{c} 
W_{1,1}^{a=b}  \to W_{1,4}: \\\
\begin{pmatrix}
1& 0 & n\\
0 & 0 & 1 \\
0& -1 & 0
\end{pmatrix}
\end{array}  
\qquad
%
%
\begin{array}{c} 
W_{1,2}^z  \to W_{1,5}:  \\
\begin{pmatrix}  n & \frac{3an}{b} & \frac{(9a^2 + b^2)n }{b^2} \\
0 & 1 & 0\\
0 & 0 & \frac{1}{n}
\end{pmatrix} 
\end{array} 
\qquad 
\begin{array}{c}
 W_{1,3} \to W_{1,1} ^{a=b}: \\
\begin{pmatrix}  \frac{1}{n} & 0 & 0 \\
0 & n &0 \\
 0 & 0 & 1
\end{pmatrix}
\end{array} \]

Recall we have assumed $z \not \in \RR$. Next we explain nonexistence of the missing arrows. The first subscript of the group in the normalizer column is the dimension of the normalizer.  Proposition \ref{normalizer} explains the missing arrows except for $ W_{1,5} \not \to W_{1,1}^{a=b}$, $W_{1,1}^{a\neq b} \not \to W_{1,1}^{a=b}$, and $W_{1,2} ^z \not \to W_{1,1}^{a=b}$ which follow from Proposition \ref{char}.

\subsection*{Dimension 2}

$$\begin{array}{c|c|c|c} \textrm{Name} & \textrm{Group} & \textrm{Normalizer}  
&\textrm{Properties} \\  
\hline 
W_{2,1} & \begin{pmatrix} \mathbb{C}^* & 0 \\ 0 & \editA{\det^{-1} \nolimits} \end{pmatrix} & W_{2,1} & 
 \cong \textcolor{purple}{\mathbb{C}^*}\\
\hline 
W_{2,2} & \begin{pmatrix} * & 0 & 0 \\ 0 & * & 0 \\ 0 & 0 & * \end{pmatrix} &  W_{2,2} & 
  \cong (\RR ^2, +)  \\
\hline 
W_{2,3} &  \begin{pmatrix} e^{t} & * & 0 \\ 0 & e^{t} & 0 \\ 0 & 0 & e^{-2t} \end{pmatrix}
& 
\textrm{conjugate to } W_{3,1} 
&  \cong (\RR ^2, +) \\
\hline 
\end{array}
$$
$$
\begin{array}{c|c|c|c}\hline 
W_{2,4} &  \begin{pmatrix} 1 & * & * \\ 0 & 1 & 0 \\ 0 & 0 & 1 \end{pmatrix}
& 
W_{5,2} 
& 
  \cong (\RR ^2, +) \\
\hline 
W_{2,5} & 
 \begin{pmatrix} 1 & 0 & * \\ 0 & 1 & * \\ 0 & 0 & 1 \end{pmatrix}
& 
W_{5,1} & 
 \cong (\RR ^2, +) \\
\hline  
W_{2,6} & \begin{pmatrix} 1 & t & * \\ 0 & 1 & t \\ 0 & 0 & 1 \end{pmatrix}
& 
W_{4,6}^{(1,0)}
& \cong (\RR ^2, +) \\
\hline 
W_{2,7}^{(a,b)} &  \begin{pmatrix} e^{at} & * & 0 \\ 0 & e^{bt} & 0 \\ 0 & 0 & e^{-(a+b)t} \end{pmatrix} &   
\textrm{conjugate to } W_{3,1} &  
 \begin{array} {c}  \cong  \Aff(\mathbb{R}) \\ \textrm{algebraic} \\ a=b \\ \\ \textrm{definable} \\a \neq b \end{array} \\ 
 \hline 
W_{2,8} &  \begin{pmatrix} e^t & * & te^t \\ 0 & e^{-2t} & 0 \\ 0 & 0 & e^t \end{pmatrix}
& 
W_{3,2} & 
  \begin{array} {c}  \cong  \Aff(\mathbb{R}) \\ \textrm{definable} \end{array} \\ 
\hline 
W_{2,9} &  \begin{pmatrix} e^{-2t} & 0 & * \\ 0 & e^t & te^t \\ 0 & 0 & e^t \end{pmatrix}
& 
W_{3,3} & 
 \begin{array} {c}  \cong  \Aff(\mathbb{R}) \\ \textrm{definable} \end{array} \\ 
\hline 
W_{2,10} &  \begin{pmatrix} e^t & e^t s & e^t \frac{s^2}{2} \\ 0 & 1 & s \\ 0 & 0 & e^{-t} \end{pmatrix} &  W_{2,10} & 
 \begin{array} {c}  \cong  \Aff(\mathbb{R}) \\ \textrm{algebraic} \end{array} \\ 
\end{array} $$ 

Note that $W _{2,3} = W_{2,7} ^{a=b}$. 
The full chart of local limits in dimension 2 is

\begin{center}
\begin{tikzcd}  
\mathbf{W_{2,1}} \arrow[r] & \mathbf{W_{2,3}} \arrow[r]  & \mathbf{W_{2,6}} \arrow[r] \arrow[rd]& \mathbf{W_{2,4}} \\ 
\mathbf{W_{2,2}} \arrow[ru] & W_{2,7}^{(a,b)}  \arrow[ru] && \mathbf{W_{2,5}}  \\
W_{2,10} \arrow[rruu, bend left= 20]& W_{2,8} \arrow[ruu] && \\
& W_{2,9} \arrow[ruuu]&&
\end{tikzcd} 
\end{center} 
We have put the abelian groups in \textbf{bold} to distinguish them. 
%
Excluding limits of $W_{2,1}$ the computations for abelian groups appear in \cite{Leitnersl3, Haettel}.    
We first give the computations for the remainder of the arrows which do appear in the chart. To finish the limits of the abelian groups, we see $W_{2,1} \to W_{2,3}$ by $diag\langle n, 1, \frac{1}{n} \rangle$. 
%
%

Next we compute all of the limits of the nonabelian groups. The next two limits are done by first conjugating by a permutation matrix to move the free element to the upper right corner, and then applying the sequence shown. 
\[ \begin{array}{c} W_{2,7}^{(a,b)} \to W_{2,6}: \\
\begin{pmatrix} 
1 &  n & 0\\
0 & 1 & \frac{-a +b}{-a-2b} n \\
0 & 0 &1
\end{pmatrix}
\end{array} 
\qquad 
\begin{array}{c} 
W_{2,8} \to W_{2,6} :\\
 \begin{pmatrix} n &  0&  \frac{-2n }{9}  \\
0& 1&  \frac{-1}{3}\\
0& 0& \frac{1}{n}
\end{pmatrix}.  
\end{array} 
\qquad
%
%
%
%
%
%
%
%
\begin{array}{c}
 W_{2,9} \to W_{2,6} : \\
 \begin{pmatrix}\frac{9}{1 - n^3}& \frac{3 n^3}{1 - n^3}& 1\\
 0& n& 0\\
  0& 0& \frac{1 - n^3}{9 n}
\end{pmatrix}
\end{array} 
\qquad 
\begin{array}{c}  
W_{2,10} \to W_{2,6}:\\
\begin{pmatrix}
1& 0& n\\
0 & 1 & 0 \\
0& 0 & 1
\end{pmatrix}.
\end{array}
\] 

It remains to prove nonexistence of the missing arrows. By Proposition \ref{univ} limits of $W_{2,1}$ are abelian groups. 
%
%
%
To finish the argument, we need to explain why there are no missing arrows.  It remains to check the non-abelian groups. All the missing arrows from non-abelian groups would originate from $W_{2,10}$, and Proposition \ref{char} does not allow an arrow to any of $W_{2,3}, W_{2,7}, W_{2,8}, W_{2,9}$, because the corresponding characteristic polynomials are not contained in $\textrm{Char}(W_{2,10})$.



\subsection*{Dimension 3}

$$\begin{array}{c|c|c|c} \textrm{Name} & \textrm{Group} & \textrm{Normalizer} &
\textrm{Properties} \\  
\hline 
W_{3,1} &  \begin{pmatrix} * & 0 & * \\ 0 & * & 0 \\ 0 & 0 & * \end{pmatrix} &   W_{3,1} & 
\begin{array}{c}  \cong \textrm{Aff}(\mathbb R)\times \mathbb R \\ \textrm{algebraic} \end{array}  \\ 
\hline 
W_{3,2} & \begin{pmatrix} e^t & * & * \\ 0 & e^t & 0 \\ 0 & 0 & e^{-2t} \end{pmatrix} & 
W_{4,2} & 
  \begin{array}{c}  \cong \textrm{Aff}(\mathbb R)\times \mathbb R \\ \textrm{algebraic} \end{array} \\  
\hline 
W_{3,3} & \begin{pmatrix} e^{-2t} & 0 & * \\ 0 & e^{t} & * \\ 0 & 0 & e^{t} \end{pmatrix} &  
W_{4,3}  & 
  \begin{array}{c}  \cong \textrm{Aff}(\mathbb R)\times \mathbb R \\ \textrm{algebraic} \end{array}  \\  
\hline 
W_{3,4} & \begin{pmatrix} 1 & * & * \\ 0 & 1 & * \\ 0 & 0 & 1 \end{pmatrix} & 
W_{5,3} & 
\begin{array}{c} \textrm{Heisenberg} \\ \textrm{algebraic} \end{array}  \\
\hline 
W_{3,5}^{(a,b)} &  \begin{pmatrix} e^{at} & * & * \\ 0 & e^{bt} & 0 \\ 0 & 0 & e^{-(a+b)t} \end{pmatrix}  & 
W_{4,2}& 
\begin{array} {c}  \cong \mathbb R ^2 \rtimes \mathbb R \\
\mathbb{R} \textrm{ acts as } W_{1,1} \\ \textrm{definable}  \end{array} \\
\hline
W_{3,6}^{(a,b)} &  \begin{pmatrix} e^{at} & 0 & * \\ 0 & e^{bt} & * \\ 0 & 0 & e^{-(a+b)t} \end{pmatrix}    &  
W_{4,3} & 
\begin{array} {c}  \cong \mathbb R ^2 \rtimes \mathbb R \\
\mathbb{R} \textrm{ acts as } W_{1,1} \\ \textrm{ definable} \end{array} \\
\hline 
W_{3,7} & \begin{pmatrix} e^{t} & e^t s & * \\ 0 & 1 & s \\ 0 & 0 & e^{-t} \end{pmatrix} & W_{3,7} &
 \begin{array} {c}  \cong \mathbb R ^2 \rtimes \mathbb R \\
\mathbb{R} \textrm{ acts as } W_{1,1} \\ \textrm{algebraic}  \end{array} \\
\hline 
W_{3,8}^{(z)} & \begin{pmatrix} e^{zt} & \mathbb{C} \\ 0 & e^{-2\Re(z)t }\end{pmatrix}, z\in \mathbb{C} & W_{4,4} &
 \begin{array} {c}  \cong \mathbb R ^2 \rtimes \mathbb R \\
\mathbb{R} \textrm{ acts as } W_{1,2} \\\textrm{limits 3 dimensional} \end{array} \\
\hline 
W_{3,9}^{(z)} & \begin{pmatrix} e^{-2\Re(z)t} & \mathbb{C} \\ 0 & e^{zt} \end{pmatrix} , z \in \mathbb{C} &  W_{4,5} & 
\begin{array} {c}  \cong \mathbb R ^2 \rtimes \mathbb R \\
\mathbb{R} \textrm{ acts as } W_{1,2} \\ \textrm{limits 3 dimensional} \end{array} \\ 
\hline 
\end{array}
$$
$$
\begin{array}{c|c|c|c}\hline 
W_{3,10} & \begin{pmatrix} e^{t} & te^t & * \\ 0 & e^{t} & * \\ 0 & 0 & e^{-2t} \end{pmatrix} &  
W_{4,6}^{(1,1)}
& 
 \begin{array} {c}  \cong \mathbb R ^2 \rtimes \mathbb R \\
\mathbb{R} \textrm{ acts as } W_{1,3} \\ \textrm{definable} \end{array} \\ 
\hline 
W_{3,11} & \begin{pmatrix} e^{-2t} & * & * \\ 0 & e^{t} & te^t \\ 0 & 0 & e^{t} \end{pmatrix} &
W_{4,6}^{(-2,1)} & 
 \begin{array} {c}  \cong \mathbb R ^2 \rtimes \mathbb R \\
\mathbb{R} \textrm{ acts as } W_{1,3} \\ \textrm{definable} \end{array} \\ 
\hline
W_{3,12} & \begin{pmatrix} * & * & 0 \\ * & * & 0 \\ 0 & 0 & 1 \end{pmatrix} & W_{4,1}  & 
\begin{array}{c}  \cong \SL_2 (\mathbb{R}) \\ \textrm{algebraic} \end{array}  \\
\hline 
W_{3,13} & \SO(2,1) & W_{3,13} & 
 \begin{array}{c}   \cong \SL_2 (\mathbb{R}) \\ \textrm{algebraic} \end{array} \\
\hline 
W _{3, 14} & \SO(3) & W_{3,14} & 
 \begin{array}{c} \cong \SO(3) \\ \textrm{algebraic} \end{array} 
\end{array} $$

We compactify notation to write a group and its transpose in the same line, for example: $W_{3, 8/9}$. We show the full chart of local limits is as follows:

\begin{center} 
\begin{tikzcd} & W_{3,1}\arrow[d] & W_{3,7}\arrow[d] && W_{3,13}\arrow[d]& W_{3,14}\arrow[dl]	& \\ 
W_{3,5/6}^{(a,b)} \arrow[drrr] &W_{3, 2/3}\arrow[drr] & W_{3,5/6}^{b=0}\arrow[dr] & W_{3,8/9}^{(z)}\arrow[d]  & W_{3,8/9}^{(i)}\arrow[dl] & W_{3,10/11}\arrow[dll] & W_{3,12}\arrow[dlll] \\ 
&&& W_{3,4} &&&& 
\end{tikzcd} 
\end{center} 


Recall \cite{CDW} compute limits of $W_{3,8} ^{i} = \SO(2,1) = W_{3,13} $ and \cite{GJT} calculate limits of $\SO(3)=W_{3,14}$. We first give the computations of the remainder of the limits where we write computations for the transpose in the same line. 
\[ 
\begin{array}{c} W_{3,12} \to W_{3,4} : \\
\begin{pmatrix}  1& 0 & n \\
0 & 1 & 0\\
 0 & 0 & 1
\end{pmatrix} 
\end{array} 
\qquad 
\begin{array}{c} 
W_{3,10/11} \to W_{3,4}: \\
\begin{pmatrix} 
n& 0 & 0 \\
0 & \frac{1}{n} & 0 \\
0 & 0 & 1
\end{pmatrix} 
\end{array} 
\qquad 
\begin{array}{c} 
 W_{3,8/9}^z \to W_{3,4}: \\
\begin{pmatrix} 
n& 0 & 0 \\
0 & \frac{1}{n} & 0 \\
0 & 0 & 1
\end{pmatrix} 
\end{array} 
\qquad 
\begin{array}{c} 
W_{3,5/6}^{( a , b)} \to W_{3,4}: \\
 \begin{pmatrix}  1 & 0 & 0 \\
 0 & 1 & n \\
  0 & 0 & 1
\end{pmatrix} 
\end{array} 
\] 
the last limit includes $W_{3,2}$ and $W_{3,3}$ as singular cases, $W_{3,2/3} =W_{3,5/6}^{( a=b)} \to W_{3,4}$.  Finally, 
$$\begin{array}{c} W_{3,7} \to W_{3,5/6}^{b=0}\\  \begin{pmatrix}  1 & 0 & 0 \\
 0 & n & 0 \\
  0 & 0 & \frac{1}{n}
\end{pmatrix} 
\end{array} 
\qquad 
\begin{array}{c}  W_{3,1} \to W_{3,5/6} ^{(a=b)} \\ \begin{pmatrix} 1 & n & 0 \\
0 & 1 & 0 \\
 0 & 0 & 1
\end{pmatrix}. 
\end{array} 
$$

Now it remains to prove nonexistence of the missing arrows. 
By Theorem \ref{algebraic} limits of $W_{3,7}$ are algebraic.  
Proposition \ref{char} rules out the rest of the options for limits of $W_{3,7}$ except for $W_{3,5/6}$. 

\begin{proposition} The only possible values of $(a,b)$ for the groups $W_{3,5/6}^{a,b}$ which can appear as limits of $W_{3,7}$ are conjugate to $W_{3,5/6}$ with $b=0$. 
\end{proposition}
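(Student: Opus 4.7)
The plan is to first use Proposition~\ref{char} to constrain $(a,b)$, then handle each surviving sub-case either by explicit conjugation or by an adjoint-eigenvalue argument.

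First, I would compute the relevant characteristic polynomials. A general element of $\mathfrak{w}_{3,7}$ is the upper-triangular matrix $\begin{pmatrix} \alpha & \beta & \gamma \\ 0 & 0 & \beta \\ 0 & 0 & -\alpha \end{pmatrix}$, with basis $X = E_{11}-E_{33}$, $Y = E_{12}+E_{23}$, $Z = E_{13}$, giving characteristic polynomial $\lambda^3 - \alpha^2\lambda$. A general element of $\mathfrak{w}_{3,5/6}^{(a,b)}$ has diagonal entries $(ta, tb, -t(a+b))$, hence characteristic polynomial $\lambda^3 - t^2(a^2+ab+b^2)\lambda + t^3 ab(a+b)$. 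Proposition~\ref{char} then forces the constant term to vanish for all $t$, yielding $ab(a+b) = 0$: at least one of $a$, $b$, $a+b$ must be zero.

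The case $b=0$ is already the desired form. In the case $a+b=0$ for $W_{3,5}^{(a,b)}$, I would conjugate by the permutation matrix swapping coordinates $2$ and $3$: this sends $\mathrm{diag}(a,-a,0)$ to $\mathrm{diag}(a,0,-a)$ and interchanges $E_{12} \leftrightarrow E_{13}$, yielding $W_{3,5}^{(a,0)}$. Symmetrically, for $W_{3,6}^{(0,b)}$ conjugation by the swap of coordinates $1$ and $2$ yields $W_{3,6}^{(b,0)}$. So these cases reduce to $b=0$ forms.

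It would remain to exclude $W_{3,5}^{(0,b)}$ with $b \neq 0$ and $W_{3,6}^{(a,-a)}$ with $a \neq 0$ as limits of $W_{3,7}$. The key tool is the adjoint action on the subalgebra itself: the brackets $[X,Y]=Y$, $[X,Z]=2Z$, $[Y,Z]=0$ in $\mathfrak{w}_{3,7}$ imply that $\mathrm{ad}(Y')$ restricted to $\mathfrak{w}_{3,7}$ has eigenvalues $\{0,\alpha,2\alpha\}$ whenever $Y' = \alpha X + \beta Y + \gamma Z$. Given a Chabauty limit $\mathfrak{l}$ of $\Ad(g_n)\mathfrak{w}_{3,7}$, which has dimension $3$ by Proposition~\ref{samedim}, I would choose bases $(B_n^i)$ of $\Ad(g_n)\mathfrak{w}_{3,7}$ converging to a basis $(f_i)$ of $\mathfrak{l}$. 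For any $X' = \sum_i c_i f_i \in \mathfrak{l}$, the elements $X_n := \sum_i c_i B_n^i$ lie in $\Ad(g_n)\mathfrak{w}_{3,7}$ and converge to $X'$; the structure constants converge accordingly, so the matrix of $\mathrm{ad}(X_n)$ on $\Ad(g_n)\mathfrak{w}_{3,7}$ converges to the matrix of $\mathrm{ad}(X')|_{\mathfrak{l}}$, and by continuity of eigenvalues, the eigenvalues of $\mathrm{ad}(X')|_{\mathfrak{l}}$ must also be of the form $\{0,\alpha,2\alpha\}$. Applying this with $X' = \mathrm{diag}(0,b,-b) \in \mathfrak{w}_{3,5}^{(0,b)}$ produces eigenvalues $\{0,b,-b\}$, which agrees with $\{0,\alpha,2\alpha\}$ only if $b=0$; the analogous computation for $\mathrm{diag}(a,-a,0) \in \mathfrak{w}_{3,6}^{(a,-a)}$ forces $a=0$.

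The main obstacle is the eigenvalue-preservation step: the $\mathrm{ad}$-eigenvalues must be tracked on the varying subalgebras $\Ad(g_n)\mathfrak{w}_{3,7}$ themselves rather than on the ambient $\mathfrak{sl}_3(\RR)$, where the $\mathrm{ad}$-spectra for $\mathfrak{w}_{3,7}$ and $\mathfrak{w}_{3,5}^{(0,b)}$ happen to agree. Care is needed in choosing bases compatible with the Chabauty convergence so that structure constants, and hence eigenvalues of the internal $\mathrm{ad}$-action, actually converge.
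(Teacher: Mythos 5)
Your argument is correct, but it proceeds quite differently from the paper's. The paper reduces via the Iwasawa decomposition to conjugation by the Borel subgroup $B$, observes that conjugation by $B$ preserves the diagonal exactly, and concludes that a non-unipotent limit must be an upper-triangular group with diagonal $\langle e^t,1,e^{-t}\rangle$; matching this (and the induced weights on the two-dimensional nilradical, which for $W_{3,7}$ and all its Borel-conjugates come in ratio $1:2$) against the forms of $W_{3,5/6}^{(a,b)}$ yields $b=0$ or $a+b=0$ for $W_{3,5}$ and $a=0$ or $b=0$ for $W_{3,6}$, each pair being permutation-conjugate. You instead first apply Proposition~\ref{char} to the characteristic polynomials to get $ab(a+b)=0$, dispose of two of the three branches by explicit permutation conjugations, and then kill the genuinely new families $W_{3,5}^{(0,b)}$ and $W_{3,6}^{(a,-a)}$ by showing that the internal $\mathrm{ad}$-spectrum $\{0,\alpha,2\alpha\}$ of $\mathfrak{w}_{3,7}$ is a degeneration invariant: your justification via bases of $\Ad(g_n)\mathfrak{w}_{3,7}$ converging to a basis of $\mathfrak{l}$, hence convergence of structure constants, is legitimate, and the translation from local Chabauty limits to Grassmannian limits of subalgebras is exactly what the paper licenses after Proposition~\ref{samedim}. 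The trade-off: the paper's route is shorter and exploits the special position of $W_{3,7}$ inside $B$, while yours is longer but purely Lie-algebraic, makes the excluded cases explicit (the paper's case list silently omits $a=0$ for $W_{3,5}$, which your $\mathrm{ad}$-spectrum computation rules out transparently), and the internal-spectrum invariant is reusable elsewhere in the classification.
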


\begin{proof}  Using the standard Iwasawa decomposition argument, any limit of $W_{3,7}$ is conjugate to a limit under an element of the Borel, $B$.  Conjugating by an element of $B$ leaves the diagonal invariant. 
Thus the limit must either be unipotent, or is conjugate to an upper triangular group with diagonal $\langle e^t, 1, e^{-t} \rangle$.  In the case $W_{3,5}$ the possibilities for the limit group are $a+b=0$ or $b=0$ which are conjugate by a permutation.  For $W_{3,6}$ we see $a=0$ and $b=0$ are conjugate. 
\end{proof} 

Next $W_{3,1}$ is algebraic, and so its limits must be algebraic, and by Proposition \ref{char} its limits must have real weights. Thus $W_{3,12}$ and $W_{3,8/9}$ cannot be limits because they have complex wieghts.  So the only possible limits are the algebraic groups in $W_{3,5/6} ^{(a,b)}$ where $a, b \in \mathbb{Q}$.  
By Proposition \ref{char} the only possibilities are the singular $W_{3,5/6} ^{(a=b)}$.



\begin{proposition} The group $W_{3,1}$ limits only to singular groups among $W_{3, 5/6}^{(a,b)}$. 
\end{proposition}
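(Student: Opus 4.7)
My plan is to exploit a semicontinuity property of the dimension of the center of a Lie subalgebra under Chabauty limits: if $\mathfrak{h}_n \to \mathfrak{l}$ in the Grassmannian (equivalent to Chabauty convergence here, by Proposition \ref{samedim}), then $\dim Z(\mathfrak{l}) \geq \dim Z(\mathfrak{h}_n)$. The observation is that the subspaces $Z(\mathfrak{h}_n) \subset \mathfrak{h}_n$, being of constant dimension, form a sequence in the compact Grassmannian $Gr(k, \mathfrak{g})$; passing to a subsequence they converge to some $Z_\infty \subset \mathfrak{l}$ of the same dimension, and continuity of the Lie bracket forces $Z_\infty \subset Z(\mathfrak{l})$. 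Indeed, for $x_n \in Z(\mathfrak{h}_n)$ and $y_n \in \mathfrak{h}_n$ with $x_n \to x$ and $y_n \to y$, we have $[x_n,y_n]=0$, hence $[x,y]=0$.

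First I will compute $Z(\mathfrak{h})$ for the Lie algebra $\mathfrak{h}$ of $W_{3,1}$. Using the basis $H_1 = \mathrm{diag}(1,0,-1)$, $H_2 = \mathrm{diag}(0,1,-1)$, $E_{13}$ with $[H_1,H_2]=0$, $[H_1,E_{13}] = 2E_{13}$, $[H_2,E_{13}] = E_{13}$, an element $\alpha H_1 + \beta H_2 + \gamma E_{13}$ is central iff $\gamma = 0$ and $2\alpha + \beta = 0$. Thus $Z(\mathfrak{h}) = \RR \cdot \mathrm{diag}(1,-2,1)$ is one-dimensional.

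Next I will compute the centers of the candidate limits $W_{3,5/6}^{(a,b)}$. For $W_{3,5}^{(a,b)}$ with basis $X = \mathrm{diag}(a,b,-(a+b))$, $E_{12}$, $E_{13}$ one has $[X,E_{12}] = (a-b)E_{12}$, $[X,E_{13}] = (2a+b)E_{13}$, $[E_{12},E_{13}] = 0$. An easy linear algebra check shows $Z=0$ whenever both weights $a-b$ and $2a+b$ are nonzero. The symmetric computation for $W_{3,6}^{(a,b)}$, with weights $2a+b$ and $a+2b$ on $E_{13}$ and $E_{23}$, gives $Z=0$ whenever both are nonzero. In either case, a nontrivial center forces at least one of the quantities $a-b$, $2a+b$, $a+2b$ to vanish, i.e., some pair of diagonal entries of $X_{(a,b)}$ to coincide, which is precisely the singular condition.

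Combining the two steps: any $\mathfrak{l}$ which is a Chabauty limit of $\mathrm{Ad}(g_n)\mathfrak{h}$ satisfies $\dim Z(\mathfrak{l}) \geq 1$, so if $\mathfrak{l}$ is of type $W_{3,5/6}^{(a,b)}$ then $(a,b)$ must be singular. The only nonroutine ingredient is the semicontinuity of $\dim Z$, but this is a soft consequence of the continuity of the bracket and the compactness of the Grassmannian, so no serious obstacle arises.
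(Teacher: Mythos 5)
Your proof is correct, but it takes a genuinely different route from the paper. The paper reduces to conjugating by Borel elements via the Iwasawa decomposition, factors $B = W_{3,1}N'$ with $N' = \{I + tE_{1,2} + sE_{2,3}\}$, and explicitly computes the $(1,2)$ and $(2,3)$ entries of $h_n^{g_n}$ to show that two diagonal entries of the limit must coincide. You instead introduce a new conjugation-invariant, the dimension of the center of the Lie algebra, and prove it is lower semicontinuous under limits; this is indeed a soft consequence of compactness of the Grassmannian and continuity of the bracket (and the reduction from Chabauty limits of groups to Grassmannian limits of subalgebras is exactly what the paper extracts from Proposition \ref{samedim}, so citing it is legitimate). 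Your computations check out: $Z(\mathfrak{w}_{3,1}) = \RR\cdot\mathrm{diag}(1,-2,1)$ is one-dimensional, while $W_{3,5/6}^{(a,b)}$ has trivial center unless one of its two root-space weights vanishes, which forces two diagonal entries to coincide, i.e., singularity. Your approach buys a reusable invariant in the spirit of the normalizer-dimension and characteristic-polynomial criteria the paper already employs, avoids all explicit conjugation computations, and in fact excludes slightly more than stated (e.g.\ $W_{3,5}^{(a,b)}$ with $a+2b=0$ but $a\neq b$ and $2a+b\neq 0$, which is singular in the diagonal sense yet centerless); the paper's computation, on the other hand, is more hands-on and feeds directly into identifying which singular groups actually do arise as limits, namely $W_{3,5/6}^{(a=b)}$.
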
 

\begin{proof} 
The subgroup $W_{3,1}$ is contained in the Borel $B$. 
Since $\SL_3( \RR)  = \SO(3) B$ and $\SO(3)$ is compact, it suffices to consider conjugating only by sequences of elements $g_n\in B$.  Notice
$B = W_{3,1} N'$ where $W_{3,1}$ contains $A$, the subgroup of diagonal matrices, and $N' = \{I+tE_{1,2}+sE_{2,3}|t,s\in\mathbb{R}\}$ (notice $N'$ is not a subgroup).  
So, it suffices to consider $g_n = I+t_n E_{1,2}+s_n E_{2,3} \in N'$, and to assume that such a sequence is unbounded, i.e  $t_n\to\infty$ or $s_n\to \infty$. 
Now, if \[ h_n = \begin{pmatrix} a_n & 0 & d_n \\
0 & b_n & 0 \\
 0 & 0 & c_n
\end{pmatrix}\qquad  \textrm{with } a_n b_n c_n =1  \] 
is a sequence of elements in $W_{3,1}$ so that $h_n^{g_n}$ converges then the entries
\[ (h_n^{g_n})_{1,2} = s_n(a_n - b_n),\; (h_n^{g_n})_{2,3} = t_n(b_n - c_n) \]
converge. Thus either $a_n - b_n \to 0$ or $b_n - c_n\to 0$ depending on $s_n\to \infty$ or $t_n\to\infty$. 
This shows that on the diagonal of the limit two of the entries are the same. 
The only possible limits which are algebraic, conjugate into $B$ and have two equal entries on the diagonal are $W_{3,5/6}^{(a=b)}$. Indeed, 
$$ \begin{array}{c} W_{3,1} \to W_{3,5} ^{a=b} \\ \begin{pmatrix} 1 & n & 0 \\
0 & 1 & 0 \\
 0 & 0 & 1
\end{pmatrix} \end{array}  
\qquad 
\begin{array}{c} W_{3,1} \to W_{3,6} ^{a=b} \\
 \begin{pmatrix} 1 & 0 & 0 \\
0 & 1 & n \\
 0 & 0 & 1
\end{pmatrix}. 
\end{array}   $$
\end{proof} 

It is not possible for $W_{3,8/9} ^z$ with $z \neq i$ to be a limit of another group, since $W_{3,8/9} ^z$ for $z \neq i$ is not algebraic and $W_{3,13}= \SO(2,1)$ and $W_{3,14} =\SO(3)$ are algebraic.    Also $W_{3,8/9} ^z$ cannot be a limit of $W_{3,1}$ or $ W_{3,7}$ by Proposition \ref{char}. 
Finally, it remains to check that $W_{3,4}$ is the only possible limit of $W_{3,8/9} ^z$.   
	 Since Proposition \ref{char} implies limits of $W_{3,8/9} ^z$ are unipotent, the only possibility is the limit we computed above to $W_{3,4}$. 

\subsection*{Dimension 4}

$$\begin{array}{c|c|c|c} \textrm{Name} & \textrm{Group} & \textrm{Normalizer} 
&\textrm{Properties} \\  
\hline 
W_{4,1} &  \begin{pmatrix} GL_2 & 0 \\ 0 & \editA{\det^{-1} \nolimits} \end{pmatrix} &W_{4,1} & 
 \begin{array} {c} \cong \GL_2 ( \RR) \\ \textrm{algebraic} \end{array}  \\ 
\hline 
W_{4,2} & \begin{pmatrix} * & * & * \\ 0 & * & 0 \\ 0 & 0 & * \end{pmatrix}  & W_{4,2} & 
 \begin{array}{c}  \cong \Aff(\mathbb{R}^2 )\\ \textrm{algebraic} \end{array}  \\
\hline 
W_{4,3} & \begin{pmatrix} * & 0 & * \\ 0 & * & * \\ 0 & 0 & * \end{pmatrix}  & W_{4,3}  &
\begin{array}{c}  \cong \Aff(\mathbb{R}^2 )\\ \textrm{algebraic} \end{array} \\
\hline 
W_{4,4} & \begin{pmatrix} \mathbb{C}^* & \mathbb{C} \\ 0 & \editA{\det^{-1} \nolimits} \end{pmatrix} & W_{4,4}  & 
 \begin{array}{c}  \cong \mathbb{C}^*\ltimes \mathbb{C} \\ \textrm{algebraic} \end{array}\\ 
\hline 
W_{4,5} & \begin{pmatrix} \editA{\det^{-1} \nolimits}& \mathbb{C} \\ 0 & \mathbb{C}^* \end{pmatrix} &   W_{4,5} & 
 \begin{array}{c}  \cong \mathbb{C}^*\ltimes \mathbb{C} \\ \textrm{algebraic} \end{array} \\ 
\hline 
W_{4,6}^{(a,b)} & \begin{pmatrix} e^{at} & * & * \\ 0 & e^{bt} & * \\ 0 & 0 & e^{-(a+b)t} \end{pmatrix}  &
W_{5,3} &
\begin{array}{c}  \cong \Heis(\mathbb{R}) \rtimes W_{1,1} \\ a= b \textrm{ algebraic} \\ a \neq b \textrm{ definable} \end{array}
\end{array} $$ 

The chart of local limits is as follows. 
\begin{center} 
\begin{tikzcd} 
W_{4,1} \arrow[drr] & W_{4,2} \arrow[dr] & W_{4,3} \arrow[d] & W_{4,4} \arrow[dl] & W_{4,5} \arrow[dll] \\
W_{4,6} ^{a\neq b} && W_{4,6} ^{a=b} && 
\end{tikzcd} 
\end{center} 
%
%
%
%
%
\[ \begin{array}{c} 
W_{4,1} \to W_{4,6} ^{a=b}:  \\
\begin{pmatrix} 1 & 0 & n \\
 0 & 1 & 0 \\
  0 & 0 &1 
\end{pmatrix} 
\end{array} 
\qquad 
\begin{array}{c} 
W_{4,2} \to  W_{4,6} ^{a=b}:  \\
\begin{pmatrix} 1 & 0 & 0 \\
 0 & 1 & n \\
  0 & 0 &1 
\end{pmatrix} 
\end{array} 
\qquad 
\begin{array}{c} 
W_{4,3} \to  W_{4,6} ^{a=b}: \\
\begin{pmatrix} 1 & n & 0 \\
 0 & 1 & 0 \\
  0 & 0 &1 
\end{pmatrix} 
\end{array} 
\qquad 
 \begin{array}{c}
W_{4,4} \to  W_{4,6} ^{a=b}: \\
\begin{pmatrix} n & 0 & 0 \\
 0 & \frac{1}{n} & 0 \\
  0 & 0 &1 
\end{pmatrix} 
\end{array} 
\qquad 
\begin{array}{c} 
W_{4,5} \to  W_{4,6} ^{a=b}: \\
\begin{pmatrix} 1 & 0 & 0 \\
 0 & n & 0 \\
  0 & 0 &\frac{1}{n} 
\end{pmatrix} 
\end{array} 
\] 
%
%
%
%
By Theorem \ref{algebraic}  $W_{4,6}^{(a,b)}$ is the only possible limit of the first 5 groups, and limits of the first 5 groups are algebraic. Notice $W_{4,6}^{(a,b)}$ is algebraic for $a,b \in \mathbb{Q}$, but we claim only the singular groups $W_{4,6}^{(a=b)}$ are possible as limits of the first 5 groups.  Apply the Iwasawa decomposition $G= KNA$, and notice we only need to conjugate by elements of $B=NA$, since $K$ is compact then conjugating by elements of $K$ will not change the limit.

To show that $W_{4,3}\not \to W_{4,6}^{(a\ne b)}$, we note that $B=W_{4,3} U$, where $U= \{ Id +t \cdot E_{1,2} \}$. So it suffices to consider conjugating by sequences in $U$. But this is what we computed in the limits of $W_{4, 2/3}$ above. Similarly $W_{4,2}\not \to W_{4,6}^{(a\ne b)}$.

To show that $W_{4,1} \not \to W_{4,6}^{a\ne b}$.
 Using the standard Iwasawa decomposition argument, it is easy to verify that $G=W_{4,1} N' \SO(3)$ where $N' = \left( \begin{smallmatrix}
	1 & 0 &  * \\ 0 & 1 & 0 \\ 0 & 0 & 1
	\end{smallmatrix}\right)$. Since $\SO(3)$ is compact, it suffices to check for limits of conjugates of $W_{4,1}$ by elements of $N'$. However, this is exactly the limit computed above.

%
%
%
%


Finally $W_{4,4}, W_{4,5} \not \to W_{4,6} ^{a\neq b}$ by Proposition \ref{char}, since if $W_{4,4}, W_{4,5}$ have real weights then two of them must be equal. 

\subsection*{Dimension 5}
Recall a subgroup is parabolic if and only if it contains the Borel subgroup. In our case, a subgroup of $G=\SL_3(\RR)$ is parabolic if it contains $B=W_{5,3}$.  
%
\editA{The space $G/B$ is compact and hence $G/N_G(H)$ is compact for every subgroup $H\le G$ with a parabolic normaliser $P=N_G(H)$. For such a subgroup, the subspace of its conjugates is homeomorphic to $G/N_G(H)$ and hence is compact in $\Sub(G)$, and in particular closed. Thus subgroups with parabolic normalizers cannot locally converge to non-conjugate subgroups. Since all subgroups of dimension 5 are have parabolic normalisers, the chart of local limits consists of isolated points.}


$$\begin{array}{c|c|c|c} \textrm{Name} & \textrm{Group} & \textrm{Normalizer} 
&\textrm{Properties} \\  
\hline 
W_{5,1} &  \begin{pmatrix} SL_2 & \begin{matrix} * \\ * \end{matrix}  \\ \begin{matrix} 0 & 0\end{matrix} & 1 \end{pmatrix} & 
W_{6,1} & 
\begin{array}{c}  \cong \SL_2(\mathbb{R}) \ltimes \mathbb{R}^2 \\ \textrm{algebraic} \end{array} \\
\hline 
W_{5,2} &  \begin{pmatrix} 1 & \begin{matrix} * & * \end{matrix}  \\ \begin{matrix} 0 \\ 0 \end{matrix} & SL_2 \end{pmatrix} & 
W_{6,2} & 
 \begin{array}{c}  \cong \SL_2(\mathbb{R}) \ltimes \mathbb{R}^2 \\ \textrm{algebraic} \end{array} \\
\hline 
W_{5,3} & \begin{pmatrix} * & * & * \\ 0 & * & * \\ 0 & 0 & * \end{pmatrix} & W_{5,3} &
\begin{array}{c}  \cong \textrm{Borel} \\ \textrm{algebraic} \end{array} \\
\end{array} $$ 
%
%
%


\subsection*{Dimension 6}
The subspace of conjugates of a parabolic subgroup is closed in the Chabauty compactification.  So the chart of limits is two isolated points. 
$$\begin{array}{c|c|c|c} \textrm{Name} & \textrm{Group} & \textrm{Normalizer} 
&\textrm{Properties} \\  
\hline 
W_{6,1} &  \begin{pmatrix} GL_2 & *  \\ 0 &\editA{\det^{-1} \nolimits} \end{pmatrix} & W_{6,1} & 
 \textrm{algebraic} \\ 
\hline 
W_{6,2} &  \begin{pmatrix} \editA{\det^{-1} \nolimits} & *  \\ 0 & GL_2 \end{pmatrix}& W_{6,2} & 
 \textrm{algebraic} \\ 
\end{array} $$ 
%

\subsection*{Dimensions 7 and 8}

Only $\SL_3(\RR)$ is of dimension 8.   There are no subgroups of dimension 7.

\end{document}